\documentclass[12pt]{amsart}
\usepackage{amssymb,latexsym,amsmath,amscd,amsthm,graphicx, hyperref, comment,enumerate}
\usepackage[all]{xy}
\setlength{\oddsidemargin}{0 in} \setlength{\evensidemargin}{0 in}
\setlength{\textwidth}{6.65 in} \setlength{\topmargin}{-.6 in}
\setlength{\headheight}{.00 in} \setlength{\headsep}{.3 in }
\setlength{\textheight}{9.7 in} \setlength{\footskip}{.4 in}

\theoremstyle{plain}

\newtheorem{theorem}[subsection]{Theorem}

\newtheorem{lemma}[subsection]{Lemma}

\newtheorem{claim}[subsection]{Claim}
\theoremstyle{definition}
\newtheorem{prop}[subsection]{Proposition}

\newtheorem{remark}[subsection]{Remark}

\newtheorem{note}[subsection]{Note}
\newtheorem{example}[subsection]{Example}

\newcommand{\uu}{\cup}
\newcommand{\ii}{\cap}
\newcommand{\UU}{\bigcup}
\newcommand{\II}{\bigcap}

\newcommand{\sci}{\subset}
\newcommand{\es}{\emptyset}


\newcommand{\ga}{\alpha}
\newcommand{\gb}{\beta}

\renewcommand{\gg}{\gamma}
\newcommand{\gh}{\eta}

\newcommand{\gk}{\kappa}

\newcommand{\gn}{\nu}
\newcommand{\go}{\omega}

\newcommand{\gq}{\theta}
\newcommand{\gs}{\sigma}
\newcommand{\gt}{\tau}

\newcommand{\gG}{\Gamma}

\newcommand{\tit}{\textit}

\newcommand{\C}[1]{\mathcal{#1}}
\newcommand{\D}[1]{\mathbb{#1}}

\newcommand{\te}{\text}

\newcommand{\ol}{\overline}
\newcommand{\ul}{\underline}

\newcommand{\vp}{\varphi}
\def\R{{\mathbb R}}

\newcommand{\sub}{\ensuremath{ \subseteq}} 
\newcommand{\set}[1]{\ensuremath{ \left\{#1\right\} }} 
\newcommand{\norm}[1]{\ensuremath{ \left\lVert#1\right\rVert }} 
\newcommand{\absval}[1]{\ensuremath{ \left\lvert#1\right\rvert }} 
\newcommand{\spot}{\ensuremath{ \makebox[1ex]{\textbf{$\cdot$}} }}
\DeclareMathOperator{\supp}{supp}
\DeclareMathOperator{\Card}{Card}



\newcommand{\Gm}{\ensuremath{\Gamma}}

\newcommand{\om}{\ensuremath{\omega}}

\newcommand{\bt}{\ensuremath{\beta}}

\newcommand{\kp}{\ensuremath{\kappa}}

\newcommand{\ta}{\ensuremath{\theta}}

\begin{document}

\title{Quantization dimension for infinite conformal iterated function systems}

\author{ Jason Atnip}
\address{School of Mathematics and Statistics\\
	University of New South Wales\\
	Sydney NSW 2052, Australia}
\email{j.atnip@unsw.edu.au}

\author{ Mrinal Kanti Roychowdhury}
\address{School of Mathematical and Statistical Sciences\\
University of Texas Rio Grande Valley\\
1201 West University Drive\\
Edinburg, TX 78539-2999, USA.}
\email{mrinal.roychowdhury@utrgv.edu}

\author{ Mariusz Urba\'nski}
\address{Department of Mathematics\\
University of North Texas\\
Denton, TX 76203-1430, USA.}
\email{urbanski@unt.edu}

\thanks{The research of the second author was supported by U.S. National Security Agency (NSA) Grant H98230-14-1-0320. The research of the third author was supported in part by the
NSF Grant DMS 0700831.}

\subjclass[2000]{Primary 60Exx; Secondary 37A50, 94A34, 28A80.}
\keywords{Quantization dimension; F-conformal measure; topological
  pressure; temperature function.}

\date{}
\maketitle

\pagestyle{myheadings}\markboth{Jason Atnip, Mrinal Kanti Roychowdhury, and  Mariusz Urba\'nski}{Quantization dimension for infinite conformal iterated function systems}
\begin{abstract}
The quantization dimension function for an $F$-conformal measure $m_F$
generated by an infinite conformal iterated function system satisfying
the strong open set condition and by a summable H\"{o}lder family of
functions is expressed by a simple formula involving the temperature function of
the system. The temperature function is commonly used to perform the
multifractal analysis, in our context of the measure $m_F$. The result
in this paper extends a similar result of
Lindsay and Mauldin established for finite conformal iterated function systems [Nonlinearity 15 (2002)].
\end{abstract}

\section{Introduction}
Various types of dimensions such as Hausdorff and packing dimensions or
the lower and the upper box-counting dimensions are important to
characterize the complexity of highly irregular sets. In the past
decades, a lot of research has been done aiming at the calculation of
these dimensions for various special cases or establishing some
significant properties. In recent years, paralleling methods have been
adopted to study the corresponding dimensions for measures
(see \cite{F}). In this paper, we study the quantization dimension for
probability measure. The quantization problem consists in studying
the quantization error
induced by the approximation of a given probability measure with
discrete probability measures of finite supports. This problem
originated in information theory and some engineering technology. A
detailed account of this theory can be found in \cite{GL1}. Given a
Borel probability measure $\mu$ on $\D R^d$, a number $r \in (0,
+\infty)$ and a natural  number $n \in \D N$, the $n^{th}$
\tit{quantization error of order $r$} for $\mu$ is defined by
\begin{align*}
	V_{n, r}(\mu):=\te{inf}\set{\int d(x, \ga)^r d\mu(x): \ga \sci \D
  R^d, \, \Card(\ga) \leq n},
\end{align*}
where $d(x, \ga)$ denotes the distance from the point $x$ to the set
$\ga$ with respect to a given norm on $\D R^d$. Letting $|x|$ denote the Euclidean norm for $x\in\R^d$, we note
that if $\int | x|^r d\mu(x)<\infty$, then there is some set $\ga$
for which the infimum is achieved (see \cite{GL1}). Such a set $\ga$ for which the infimum occurs and contains no more than $n$ points is called an \tit{optimal set of $n$-means} of order $r$ for $0<r<+\infty$. To see some work in the direction of optimal sets of $n$-means, one is refereed to \cite{DR, GL4, R2, R3, R4, RR}.
The \tit{upper} and the \tit{lower
quantization dimensions} of order $r$ of $\mu$ are defined to be
\[
	\ol D_r(\mu): =\limsup_{n \to \infty} \frac{r\log n}{-\log V_{n, r}(\mu)}; \ \ul D_r(\mu): =\liminf_{n \to \infty} \frac{r\log n}{-\log V_{n, r}(\mu)}.
\]
If $\ol D_r(\mu)$ and $\ul D_r(\mu)$ coincide, we call the common value the \tit{quantization dimension} of order $r$ of the probability measure $\mu$, and is denoted by $D_r:=D_r(\mu)$. One sees that the quantization dimension is actually a function $r\mapsto D_r$ which measures the asymptotic rate at which $V_{n, r}$ goes to zero. If $D_r$ exists, then one can write
\[
	\log V_{n, r} \sim \log \left[\left(1/n\right)^{r/D_r}\right].
\]
For $s>0$, we define the $s$-dimensional \tit{upper} and \tit{lower} quantization coefficients for $\mu$ of order $r$ by $\limsup_{n\to\infty} n V_{n,r}^{s/r}(\mu)$ and $\liminf_{n\to\infty} n V_{n,r}^{s/r}(\mu)$, respectively. Compared to the lower and the upper quantization dimensions, the lower and the upper quantization coefficients provide us with more accurate information on the asymptotic of the quantization error.
Graf and Luschgy first determined the quantization dimension of order $r$ of a probability measure generated by a
finite system of self-similar mappings associated with a probability
vector (see \cite{GL1, GL2}). Lindsay and Mauldin extended the above
result to the $F$-conformal measure $m$ associated with a conformal
iterated function system determined by finitely many conformal
mappings (see \cite{LM}). Both the above results also show that quantization dimension function has a relationship with the temperature function of the thermodynamic formalism that arises in multifractal analysis of the measure. In \cite{MR}, Mihailescu and Roychowdhury determined the quantization dimension of probability measures generated by infinite system of self-similar mappings satisfying an strong open set condition associated with probability vectors, which is an infinite extension of the result of Graf-Luschgy (see \cite{GL1, GL2}). In this paper, we give an
extension of Lindsay and Mauldin's (see \cite{LM}) result to the realm
of iterated function systems with a countably infinite alphabet. The probability
measure $m_F$ considered here is the $F$-conformal measure associated
with a summable H\"older family of functions
\begin{align*}
	F:=\set{f^{(i)} : X \to \D R, i \in I}	
\end{align*}
and a conformal
iterated function system
\begin{align*}
	\Phi=\set{\vp_i : X \to X, i \in I}	
\end{align*}
where $I$
is a countable set, called alphabet, with finitely many, or what we want to emphasize,
infinitely many, elements. We show that for this measure $m_F$, the
quantization dimension function $D_r$, $0<r<+\infty$, exists and is uniquely determined by the following formula:
\begin{equation} \label{eq200011} \lim_{n\to\infty}\frac 1 n\log
  \sum_{\go \in
    I^n}\big(\|\exp(S_\go(F))\|\|\vp_\go'\|^r\big)^{\frac{D_r}{r+D_r}}=0,
\end{equation}
where $\norm{\spot}$ denotes the supremum norm on $X$.
The multifractal formalism for a probability measure corresponding to a
two parameter family of H\"older continuous functions
\begin{align*}
G_{q,t}:=\set{g_{q, t}^{(i)} :=q f^{(i)} +t \log |\vp_i'| : i \in I}
\end{align*}
does indeed hold if $I$ is a countable set (see \cite{HMU, MU}). In particular, the
singularity exponent $\gb(q)$ (also known as the temperature function)
satisfies the usual equation
\begin{equation}\label{eq2001}
	\lim_{n\to\infty}\frac 1 n\log
	\sum_{\go \in I^n}\|\exp(S_\go(F))\|^q
	 \|\vp_\go'\|^{\gb(q)}=0,
\end{equation}
and that the spectrum $f(\ga)$ is the Legendre transform of
$\gb(q)$. Comparing \eqref{eq200011} and \eqref{eq2001}, we see that if
$q_r=\frac{D_r}{r+D_r}$, then $\gb(q_r)=rq_r$, that is, the
quantization dimension function of order $r$ of an infinite $F$-conformal measure
has a relationship with the temperature function of the thermodynamic
formalism arising in multifractal analysis. For thermodynamic
formalism, multifractal analysis and the Legendre transform one can
see \cite{F, HMU}.

\section{Basic definitions and lemmas}
In this paper, $\D R^d$ denotes the $d$-dimensional Euclidean space equipped with a metric $d$ compatible with the Euclidean topology. Let us write,
\begin{align*}
	 V_{n, r}=V_{n, r}(\mu):=\inf\set{\int d(x, \ga)^r d\mu(x) : \ga \sci \D R^d, \ Card(\ga)\leq n}, \te{ and then } e_{n, r}(\mu):=V_{n, r}^{\frac 1 r}(\mu).
\end{align*}
Let us set
\begin{align*}
	u_{n, r} (\mu)=\inf\set{\int d(x, \ga\uu U^c)^r d\mu(x) : \ga \sci \D R^d, \Card(\ga)\leq n},
\end{align*}
where $U$ is the set which comes from the strong open set condition (definition follows) and $U^c$ denotes the complement of $U$. We see that
\[
	u_{n, r}^{\frac 1 r} \leq V_{n, r}^{\frac 1 r}=e_{n, r}.
\]
We call sets $\ga_n\sci \D R^d$, for which the above infimum are achieved, \tit{$n$-optimal sets} for $e_{n,r}$, $V_{n, r}$ or $u_{n, r}$, respectively.  As stated before, $n$-optimal sets exist when $\int |x|^r d\mu(x)<\infty$.

Let $X$ be a nonempty compact subset of $\D R^d$ with $\te{cl(int}(X))=X$ and $I$ be a countable set with infinitely many elements. Without any loss of generality we can take $I=\set{1, 2,  \cdots}$, i.e., the set of natural numbers. Let $\Phi=\set{\vp_i : i \in I}$ be a collection of injective contractions from $X$ into $X$ for which there exists $0<s<1$ such that
\begin{equation} \label{eq112} d(\vp_i(x), \vp_i(y))\leq s d(x, y)\end{equation}
for every $i \in I$ and every pair of points $x, y \in X$. Thus, the system $\Phi$ is uniformly contractive. Any such collection $\Phi$ of contractions is called an iterated function system.  By $\go:=\go_1\go_2\cdots \go_n \in I^n$, it is meant that $\go$ is a word of length $n$ over the symbols in $I$, $n\geq 1$, the length of an empty word is zero. Sometimes, we denote the length of a word $\go \in I^\ast$ by $|\go|$, and by $\go^-$ we denote the word obtained from $\go$ by deleting the last letter of $\go$, i.e., $
	\go^-=\go_1\go_2\cdots \go_{n-1}
$
if $\go=\go_1\go_2\cdots \go_n$ for $n\geq 1$. Write $I^*:=\UU_{n\geq 1}I^n$ to denote the set of all finite words in $I$. For $\go=\go_1\go_2\cdots \go_n \in I^n$, set
$\vp_\go=\vp_{\go_1}\circ \vp_{\go_2}\circ \cdots \circ \vp_{\go_n}.$
We have made the convention that the empty word $\es$ is the only word of length $0$ and $\vp_\es=\te{Id}_X$.
If $\go \in I^\ast\uu I^\infty$ and $n\geq 1$ does not exceed the length of $\go$, we denote by $\go|_n$ the word $\go_1\go_2\cdots \go_n$. Observe that given $\go \in I^\infty$, the compact sets $\vp_{\go|_n}(X)$, $n\geq 1$, are decreasing and their diameters converge to zero. In fact by \eqref{eq112}, we have
\begin{equation} \label{eq113}
	\te{diam}(\vp_{\go|_n}(X))\leq s^n \te{diam}(X),
\end{equation}
which implies that the set
\[
	\pi(\go)=\II_{n=1}^\infty \vp_{\go|_n}(X)
\]
is a singleton, and therefore, this formula defines a map $\pi : I^\infty \to X$ which, in view of \eqref{eq113} is continuous. The main object of our interest will be the limit set
\[
	J:=\pi(I^\infty)=\UU_{\go \in I^\infty}\II_{n= 1}^\infty \vp_{\go|_n}(X).
\]
The limit set $J$ is not necessarily compact. Let $\gs : I^\infty \to I^\infty$ denote the left shift map (cutting out the first coordinate) on $I^\infty$, i.e., $\gs(\go)=\go_2\go_3\cdots$ where $\go=\go_1\go_2\cdots$. Note that
$\pi\circ \gs(\go)=\vp_{\go_1}^{-1}\circ \pi(\go)$, and hence, rewriting $\pi(\go)=\vp_{\go_1}(\pi(\gs(\go)))$, we see that
\[
	J=\UU_{j=1}^\infty \vp_j(J).
\]
In the sequel, for $\go \in I^\ast$ we write $J_\go:=\vp_\go(J)$. We say that the iterated function system satisfies the \tit{open set condition} (OSC) if there exists a bounded nonempty open set $U \sci X$ (in the topology of $X$) such that $\vp_i(U) \sci U$ for every $i \in I$ and $\vp_i(U) \ii \vp_j(U)=\es$ for every pair $i, j \in I, \, i\neq j$, and the \tit{strong open set condition} (SOSC) if $U$ can be chosen such that $U \ii J \neq \es$. We assume that the infinite iterated function system considered in this paper satisfies the strong open set condition. An iterated function system satisfying the open set condition is said to be \te{conformal} if the following conditions are satisfied:
\begin{enumerate}[(i)]
	
\item $U=\te{Int}_{\D R^d}(X)$.

\item There exists an open connected set $V$ with $X \sci V \sci \D R^d$ such that all maps $\vp_i$, $i\in I$, extend to $\C C^1$-conformal diffeomorphisms of $V$ into $V$.

\item There exist $\gg, \ell >0$ such that for every $x \in \partial X \sci \D R^d$ there exists an open cone Con$(x, \gg, \ell) \sci \te{Int}(X)$ with vertex $x$, central angle of Lebesgue measure $\gg$, and altitude $\ell$.

\item Bounded Distortion Property (BDP): There exists a constant $K\geq 1$ such that
\[
	|\vp_{\go}'(y)| \leq K|\vp_{\go}'(x)|
\]
for every $\go \in I^*$ and every pair of points $x, y \in V$, where $|\vp_\go'(x)|$ means the norm of the derivative.
\end{enumerate}
For the conformal iterated function system let us now state the
following well-known lemma (for details of the proof see
\cite{P}).

\begin{lemma} \label{lemma5566} There exists a constant $\tilde K \geq K$ such that
\[
	\tilde K^{-1}\|\vp_\go'\|d(x, y)\leq d(\vp_\go(x), \vp_\go(y))\leq \tilde K \|\vp_\go'\|d(x, y)
\]
for every $\go\in  I^\ast$ and every pair of points $x, y \in V$, where $d$ is the metric on $X$.
\end{lemma}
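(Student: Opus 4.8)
The plan is to read the desired two-sided inequality as saying that each $\vp_\go$ is, up to a multiplicative constant independent of $\go$, a similarity of ratio $\|\vp_\go'\|$, and to extract it from conformality together with the classical distortion theory for conformal maps. The first step is a reformulation of the Bounded Distortion Property: since $\|\vp_\go'\|=\sup_{z\in X}|\vp_\go'(z)|$ and $X\ci V$, comparing $|\vp_\go'(z)|$ for $z\in V$ with $|\vp_\go'(w)|$ for $w\in X$ via (BDP) and taking the supremum over $w$ yields
\[
K^{-1}\|\vp_\go'\|\le|\vp_\go'(z)|\le K\|\vp_\go'\|\qquad\te{for all }\go\in I^*\te{ and }z\in V ;
\]
thus $\sup_V|\vp_\go'|$ and $\inf_V|\vp_\go'|$ are each comparable to $\|\vp_\go'\|$ with the fixed constant $K$.

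For the right-hand (upper) inequality, given $x,y\in V$ I would join them by a rectifiable curve $\gg\ci V$ of length at most $\Lambda\,d(x,y)$, where $\Lambda\ge1$ is a quasiconvexity constant depending only on the position of $X$ inside $V$ (this is where the cone condition (iii) and $X=\te{cl(int}(X))$ enter; for the applications in the paper it is enough to know this for $x,y\in X$). Conformality of $\vp_\go$ on $V$ then gives
\[
d(\vp_\go(x),\vp_\go(y))\le\te{length}(\vp_\go\circ\gg)=\int_{\gg}|\vp_\go'|\le(\sup_V|\vp_\go'|)\,\te{length}(\gg)\le\Lambda K\,\|\vp_\go'\|\,d(x,y),
\]
which is the upper bound with constant $\Lambda K$.

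For the left-hand (lower) inequality I would run the same scheme for the inverse branch $\vp_\go^{-1}$, a $\C C^1$-conformal diffeomorphism of $\vp_\go(V)$ with $|(\vp_\go^{-1})'|=|\vp_\go'\circ\vp_\go^{-1}|^{-1}$, hence (by the first step) everywhere comparable to $\|\vp_\go'\|^{-1}$ with constant $K$. The main obstacle is exactly here: unlike the upper bound, which is plain integration of $|\vp_\go'|$ along a curve, the lower bound cannot be obtained by a naive path argument --- the curve joining $\vp_\go(x)$ to $\vp_\go(y)$ must be taken inside $\vp_\go(V)$ with length controlled by $d(\vp_\go(x),\vp_\go(y))$, and such control is a priori available only through the very quantity we wish to bound from below, so the argument would be circular. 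This is overcome by the conformal rigidity/distortion theorems --- Koebe's distortion theorem when $d=2$, Liouville's theorem (conformal maps are M\"obius) when $d\ge3$, and monotonicity together with (BDP) when $d=1$ --- which ensure that $\vp_\go$ maps balls centred at points of $X$ onto sets that are round up to a universal factor, so that short curves in $\vp_\go(V)$ lift to short curves in $V$; this is also the machinery underlying (BDP) itself. Carrying it through gives $d(\vp_\go(x),\vp_\go(y))\ge(\Lambda'K)^{-1}\|\vp_\go'\|\,d(x,y)$, and setting $\tilde K:=\max\{\Lambda,\Lambda'\}\,K\ (\ge K)$ establishes both inequalities at once; we refer to \cite{P} for the details of this standard argument.
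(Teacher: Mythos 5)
Your sketch is exactly the standard argument behind the citation the paper itself relies on (the paper offers no proof, merely pointing to \cite{P}): use (BDP) to make $|\vp_\go'|$ comparable to $\|\vp_\go'\|$ throughout $V$, get the upper bound by integrating $|\vp_\go'|$ along uniformly short paths, and obtain the lower bound from genuine conformal distortion theory (Koebe for $d=2$, Liouville for $d\ge 3$), whose details you, like the authors, delegate to \cite{P} --- so this is essentially the same (citation-backed) approach and is correct in outline. Two small caveats: the uniform path constant $\Lambda$ comes from compactness of $X$ inside the open connected set $V$ (not really from the cone condition), and it is available for $x,y\in X$ or after shrinking $V$ to a suitable quasiconvex neighborhood of $X$ --- which, as you note, is all the paper ever uses.
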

Inequality \eqref{eq112} implies that for every $i\in I$,
\[
	\|\vp_i'\|=\sup_{x \in X} |\vp_i'(x)|= \sup_{x\in X}  \lim_{y\to x} \frac{d(\vp_i(y), \vp_i(x))}{d(y, x)} \leq  \sup_{x\in X} \lim_{y\to x}\frac{s d(x, y)}{d(x, y)}=s,
\]
and hence, $\|\vp_\go'\|\leq s^n$ for every $\go \in I^n$, $n\geq
1$. For $t\geq 0$, the topological pressure function of the conformal
iterated function system $\Phi=\set{\vp_i : i \in I}$ is given by
\[
	P(t)=\lim_{n\to \infty} \frac  1 n \log \sum_{\go \in I^n} \|\vp_\go'\|^t,
\]
provided the limit exists.
Define
\begin{align*}
	\ta_\Phi:=\inf\set{t\geq 0:P(t)<\infty}.	
\end{align*}
the following Proposition of \cite{MU1} describes the good behavior of the topological pressure function.
\begin{prop}(see \cite{MU1} Proposition 3.3)
	$P(t)$ is non-increasing on $[0,\infty)$ and strictly decreasing, convex, and continuous on $(\ta_\Phi,\infty)$ with $P(t)\to-\infty$ as $t\to\infty$.
\end{prop}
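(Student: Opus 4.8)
The plan is to reduce the proposition to elementary properties of the partition functions
$Z_n(t):=\sum_{\go\in I^n}\|\vp_\go'\|^t\in(0,+\infty]$, for which $P(t)=\lim_{n\to\infty}\frac1n\log Z_n(t)$. First I would record the two multiplicative estimates that drive everything. Since $\vp_{\go\gt}=\vp_\go\circ\vp_\gt$, the chain rule gives $|\vp_{\go\gt}'(x)|=|\vp_\go'(\vp_\gt(x))|\,|\vp_\gt'(x)|$; bounding the factor $|\vp_\go'(\vp_\gt(x))|$ above by $\|\vp_\go'\|$ and below by $K^{-1}\|\vp_\go'\|$ (the latter from the Bounded Distortion Property) gives
\[
K^{-1}\|\vp_\go'\|\,\|\vp_\gt'\|\le\|\vp_{\go\gt}'\|\le\|\vp_\go'\|\,\|\vp_\gt'\|
\qquad(\go,\gt\in I^\ast).
\]
Summing over the unique factorization of a word of length $m+n$ into a word of length $m$ followed by one of length $n$ yields the submultiplicativity $Z_{m+n}(t)\le Z_m(t)Z_n(t)$; iterating the lower bound yields $Z_n(t)\ge K^{-(n-1)t}Z_1(t)^n$. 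In particular, for each fixed $t$ either all $Z_n(t)$ are finite or all are infinite. In the first case $(\log Z_n(t))_{n\ge1}$ is a real subadditive sequence, so Fekete's lemma gives $P(t)=\inf_{n\ge1}\frac1n\log Z_n(t)$; in the second $P(t)=+\infty$. Hence $P$ is well defined on $[0,+\infty)$ with values in $[-\infty,+\infty]$.

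Next I would handle monotonicity, finiteness, convexity and continuity. By \eqref{eq112} we have $\|\vp_\go'\|\le s^{|\go|}<1$, so $t\mapsto\|\vp_\go'\|^t$ is non-increasing; hence every $Z_n$ is non-increasing in $t$, and therefore so is $P$ on $[0,+\infty)$. Consequently $\{t\ge0:P(t)<+\infty\}$ is upward closed, so by the definition of $\ta_\Phi$ we get $P(t)<+\infty$ for all $t>\ta_\Phi$. For convexity on $(\ta_\Phi,+\infty)$ I would use Hölder's inequality: if $t=\gl t_1+(1-\gl)t_2$ with $\gl\in[0,1]$, then writing $\|\vp_\go'\|^{t}=(\|\vp_\go'\|^{t_1})^{\gl}(\|\vp_\go'\|^{t_2})^{1-\gl}$ and summing over $\go\in I^n$ gives $Z_n(t)\le Z_n(t_1)^{\gl}Z_n(t_2)^{1-\gl}$; applying $\frac1n\log(\cdot)$ and letting $n\to\infty$ gives $P(t)\le\gl P(t_1)+(1-\gl)P(t_2)$. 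Since $P$ is real-valued on the open interval $(\ta_\Phi,+\infty)$, this convexity automatically forces continuity there.

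It remains to prove $P(t)\to-\infty$ as $t\to\infty$ and strict monotonicity on $(\ta_\Phi,+\infty)$; both are vacuous unless $\ta_\Phi<+\infty$, so assume this. Fix $t_0>\ta_\Phi$; since $P(t_0)<+\infty$, the dichotomy above gives $Z_1(t_0)<+\infty$. For $t\ge t_0$ and $i\in I$ we have $\|\vp_i'\|^{t}=\|\vp_i'\|^{t_0}\|\vp_i'\|^{t-t_0}\le s^{t-t_0}\|\vp_i'\|^{t_0}$, so $Z_1(t)\le s^{t-t_0}Z_1(t_0)$ and therefore
\[
P(t)\le\log Z_1(t)\le(t-t_0)\log s+\log Z_1(t_0),
\]
which tends to $-\infty$ as $t\to\infty$ because $\log s<0$. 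For strict monotonicity, suppose $P(t_1)=P(t_2)$ for some $\ta_\Phi<t_1<t_2$; then $P$ is constant on $[t_1,t_2]$, and a convex non-increasing function that is constant on a subinterval must remain constant to the right of it, so $P$ would be constant on $[t_1,+\infty)$ --- contradicting $P(t)\to-\infty$. Hence $P$ is strictly decreasing on $(\ta_\Phi,+\infty)$.

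The step needing the most care is the preliminary one: applying submultiplicativity and Fekete's lemma legitimately in the presence of possibly infinite partition functions. The reverse distortion estimate $Z_n(t)\ge K^{-(n-1)t}Z_1(t)^n$ is exactly what removes the ``mixed'' possibility and, in the same breath, guarantees that a single value $t_0>\ta_\Phi$ with $P(t_0)<+\infty$ produces a finite $Z_1(t_0)$ on which to run the tail estimate. Everything after that is a routine assembly of submultiplicativity, Hölder's inequality, and standard facts about convex functions on an interval.
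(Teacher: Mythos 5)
The paper offers no proof of this proposition; it simply cites \cite{MU1}, so your write-up is being judged against the standard argument there, which it largely reproduces: submultiplicativity of $Z_n(t)$ plus Fekete's lemma for existence of the limit, H\"older's inequality for convexity, convexity plus real-valuedness for continuity, and $\|\vp_\go'\|\le s^{|\go|}$ for monotonicity and the tail decay. Those steps are all correct. In particular your distortion-based bound $Z_n(t)\ge K^{-(n-1)t}Z_1(t)^n$ does rule out the mixed finite/infinite possibility, and --- although you never say so --- it also yields $P(t)\ge \log Z_1(t)-t\log K>-\infty$, which is the lower half of the ``real-valued on $(\ta_\Phi,\infty)$'' claim that your continuity step silently uses; add one sentence to that effect.

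The genuine gap is your handling of the case $\ta_\Phi=+\infty$. You declare both remaining claims ``vacuous'' in that case; strict monotonicity on the empty interval indeed is, but ``$P(t)\to-\infty$ as $t\to\infty$'' is not: if $\ta_\Phi=+\infty$ then $P\equiv+\infty$ and the limit statement is simply false, so the case must be excluded, not waved away. This is exactly where the hypotheses your proof never touches (open set condition, the cone condition, boundedness of $X$, conformality) enter: the sets $\vp_i(\te{Int}(X))$, $i\in I$, are pairwise disjoint subsets of the bounded set $X$, and by the lower estimate in Lemma~\ref{lemma5566} each contains a ball of radius comparable to $\|\vp_i'\|$, so $\sum_{i\in I}\|\vp_i'\|^{d}<+\infty$; hence $Z_1(d)<\infty$, $P(d)\le\log Z_1(d)<+\infty$, and $\ta_\Phi\le d<+\infty$. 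With that supplement (or a citation of the corresponding fact in \cite{MU1}) your tail estimate $P(t)\le(t-t_0)\log s+\log Z_1(t_0)$ finishes the proof. As a side remark, the single inequality $Z_n(t_2)\le s^{\,n(t_2-t_1)}Z_n(t_1)$, i.e.\ $P(t_2)\le P(t_1)+(t_2-t_1)\log s$, gives strict decrease on $(\ta_\Phi,\infty)$ and $P(t)\to-\infty$ simultaneously, so your convexity detour for strictness, while valid, is unnecessary.
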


As it was shown in \cite{MU1}, there are
two disjoint classes of conformal iterated function systems, regular
and irregular. A system is regular if there exists $t\geq 0$ such that
$P(t)=0$. Otherwise, the system is irregular. Moreover, if $\Phi$ is a
conformal iterated function system, then Theorem 3.15 of \cite{MU1} gives
\begin{align*}
	\te{dim}_{\te{H}}(J)=\sup\set{\te{dim}_{\te{H}} (J_F) : F \sci I, F \te{
    finite}}=\inf \set{ t \geq 0 : P(t)\leq 0}\geq \ta_\Phi,
\end{align*}
where $\te{dim}_{\te{H}}(J)$ represents the Hausdorff dimension of the limit
set $J$, and \ $J_F$ is the limit set associated to the index set
$F$. If a system is regular and $P(t)=0$, then $t=\te{dim}_{\te{H}}(J)$. Let
us assume that the conformal iterated function system considered in
this paper is regular.

Let $F=\set{f^{(i)} : X\to \D R}_{i\in I}$ be a family of continuous functions such that for each $n\geq 1$ if we define
\[
	v_n(F)=\sup_{\go \in I^n}\sup_{x, y \in X}\left\{|f^{(\go_1)}(\vp_{\gs(\go)}(x))-f^{(\go_1)}(\vp_{\gs(\go)}(y))|\right\}e^{\gb(n-1)}
\]
for some $\gb>0$, then the following is satisfied:
\begin{equation} \label{eq72}
	v_{\gb}(F)=\sup_{n\geq 1}\left\{v_n(F)\right\}<\infty.
\end{equation}
The collection $F$ is called then a \tit{H\"older family of functions} (of order $\gb$). Denote by $\|\spot\|$ the supremum norm on the Banach space $\C C(X)$, and by $1\!\!1$ the function with constant value 1 on $X$. If in addition to \eqref{eq72} we have
\[
	\sum_{i \in I} \|e^{f^{(i)}}\|<\infty \te{ or  equivalently } \C L_F(1\!\!1) \in \C C(X),
\]
where
\[
	\C L_F(g)(x)=\sum_{i \in I} e^{f^{(i)}(x)}g(\vp_i(x)), \quad  g\in \C C (X),
\]
is the associated Perron-Frobenius or transfer operator, then $F$ is called a \tit{summable H\"older family of functions} (of order $\gb$). It
was originally in \cite{HMU}, and called a strongly H\"older family of functions. In our paper, we assume that $F$ is a  summable H\"older
family of functions of order $\gb$. For $n\geq 1$ and $\go \in I^n$, set
\begin{align*}
	S_\go(F):=\sum_{j=1}^nf^{(\go_j)}\circ\vp_{\gs^j(\go)}.
\end{align*}
Then, following the classical thermodynamic formalism, the \tit{topological pressure} of $F$ is defined by
\[
	P(F):=\lim_{n\to \infty} \frac 1 n\log \sum_{\go \in I^n}\|\exp(S_\go(F))\|.
\]
The limit above exists by the standard theory of subadditive sequences. Subtracting from each of the functions $f^{(i)}$ the topological pressure of $F$ we may assume that $P(F)=0$. By \cite{HMU}, there exists
a unique Borel probability measure $m_F$ on
$X$ with $m_F(J)=1$ such that for any continuous function
  $g : X\to \D R$ and $n\geq 1$,
\begin{equation*}
	\int g\,dm_F=\sum_{\go\in I^n}\int \exp(S_\go(F))\cdot (g\circ \vp_\go)\,dm_F.
\end{equation*}
In particular, for any Borel set $A \sci J$ and $\gt \in I^n$, $n\geq 1$, we have
\begin{align*}
	m_F(\vp_\gt (A))& =\sum_{\go \in I^n}\int
  \exp(S_\go(F)(x))\cdot (I_{\vp_\gt(A)}\circ \vp_\go(x))\,dm_F(x)\\
	&=\int \exp(S_\gt(F)(x))\cdot (I_{\vp_\gt(A)}\circ \vp_\gt(x))\,dm_F(x)\\
	&=\int_A \exp(S_\gt(F)(x))\,dm_F(x).
\end{align*}
Moreover, $m_F$ satisfies $m_F(\vp_\go (X) \ii \vp_\gt(X))=0$ for all
incomparable words $\go, \gt \in I^*$. The probability measure
$m_F$ is called the \tit{$F$-conformal measure} of the infinite conformal iterated
function system $\Phi$ and the summable H\"older family of functions
$F=\set{f^{(i)} : X \to \D R, i \in I}$.

The following lemma is known.

\begin{lemma} (see \cite{LM}, Lemma 2) \label{lemma92}  There exists a constant $C\geq 1$ such that for any $x, y \in X$ and $\go \in I^*,$
\[
	\frac{\exp(S_\go(F)(x))}{\exp(S_\go(F)(y))} \leq C.
\]
In particular, for any $x \in X$ and $\go \in I^*$,
\begin{align*}
	\exp(S_\go(F)(x))\geq C^{-1}\|\exp(S_\go(F))\|.
\end{align*}
\end{lemma}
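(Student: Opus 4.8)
The plan is to prove the uniform comparability of $\exp(S_\go(F)(x))$ and $\exp(S_\go(F)(y))$ by taking logarithms and estimating $S_\go(F)(x)-S_\go(F)(y)$ term by term against the H\"older bound \eqref{eq72}. Recall that for $\go=\go_1\go_2\cdots\go_n\in I^n$ we have $S_\go(F)=\sum_{j=1}^n f^{(\go_j)}\circ\vp_{\gs^j(\go)}$, so that
\[
  S_\go(F)(x)-S_\go(F)(y)=\sum_{j=1}^n\Big(f^{(\go_j)}(\vp_{\gs^j(\go)}(x))-f^{(\go_j)}(\vp_{\gs^j(\go)}(y))\Big).
\]
First I would observe that the $j$-th summand has the shape appearing in the definition of $v_n(F)$: writing $\gt=\gs^{j-1}(\go)$, which is a word of length $n-j+1\geq 1$ with first letter $\go_j$, the quantity $f^{(\go_j)}(\vp_{\gs(\gt)}(x))-f^{(\go_j)}(\vp_{\gs(\gt)}(y))$ is exactly the kind of difference controlled in $v_{|\gt|}(F)$. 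Hence, by \eqref{eq72}, its absolute value is at most $v_{|\gt|}(F)e^{-\gb(|\gt|-1)}\leq v_\gb(F)e^{-\gb(n-j)}$.

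Next I would sum these estimates over $j=1,\dots,n$ and use the geometric series: $\sum_{j=1}^n e^{-\gb(n-j)}=\sum_{k=0}^{n-1}e^{-\gb k}\leq \frac{1}{1-e^{-\gb}}$, which is finite and independent of $n$ and of $\go$. This yields
\[
  \big|S_\go(F)(x)-S_\go(F)(y)\big|\leq \frac{v_\gb(F)}{1-e^{-\gb}}=:\log C,
\]
and exponentiating gives $\exp(S_\go(F)(x))/\exp(S_\go(F)(y))\leq C$ with $C\geq 1$ depending only on the H\"older family $F$ and not on $\go$, $x$, or $y$. The ``in particular'' statement then follows immediately: taking the supremum over $y\in X$ of the left-hand side of $\exp(S_\go(F)(x))\geq C^{-1}\exp(S_\go(F)(y))$ (legitimate since $C^{-1}$ does not depend on $y$) gives $\exp(S_\go(F)(x))\geq C^{-1}\|\exp(S_\go(F))\|$.

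The only mild subtlety — the ``hard part,'' such as it is — is bookkeeping with the shift: one must be careful that $\gs^j(\go)$ in the definition of $S_\go(F)$ and $\gs(\gt)$ with $\gt=\gs^{j-1}(\go)$ really match the pattern $\vp_{\gs(\,\cdot\,)}$ used inside $v_n(F)$, and that the exponent $e^{\gb(n-1)}$ in \eqref{eq72} is indexed by the length of the relevant word so that the decay exponents $e^{-\gb(n-j)}$ come out correctly. Once this indexing is set up, the rest is the elementary geometric-series bound above, which is uniform precisely because $F$ is a H\"older family of order $\gb>0$.
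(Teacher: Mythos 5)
Your proof is correct and is essentially the standard argument behind the result, which the paper itself does not reprove but simply cites from Lemma 2 of \cite{LM}: your shift bookkeeping ($\gt=\gs^{j-1}(\go)$, so the $j$-th summand is controlled by $v_{n-j+1}(F)e^{-\gb(n-j)}$) matches the definition of $v_n(F)$ exactly, the geometric series $\sum_{k\ge 0}e^{-\gb k}$ gives the uniform bound $|S_\go(F)(x)-S_\go(F)(y)|\le v_\gb(F)/(1-e^{-\gb})=\log C$, and exponentiating and then taking the supremum over $y$ yields both assertions. No gaps; nothing further is needed.
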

Using Lemma~\ref{lemma92}, we can deduce the following lemma.

\begin{lemma} (see \cite[Lemma~3.3]{R1}) \label{lemma921} Let $C\geq 1$ be as in Lemma~\ref{lemma92}. Then, for $\go, \gt \in I^\ast$, and $x, y \in X$, we have
\[
	\exp(S_{\go\gt}(F)(x)) \geq C^{-2} \|\exp(S_{\go}(F))\|\|\exp(S_{\gt}(F))\|.
\]
\end{lemma}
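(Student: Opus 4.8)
The plan is to reduce the statement to two applications of Lemma~\ref{lemma92}, the link being the additivity (cocycle) property of the Birkhoff-type sums $S_\go(F)$. Concretely, the first step is to record the identity
\[
	S_{\go\gt}(F)(x)=S_\gt(F)(x)+S_\go(F)\big(\vp_\gt(x)\big)\q\te{for all }\go,\gt\in I^\ast,\ x\in X.
\]
To prove this I would write $m=|\go|$ and $n=|\gt|$, expand $S_{\go\gt}(F)(x)=\sum_{j=1}^{m+n} f^{((\go\gt)_j)}\big(\vp_{\gs^j(\go\gt)}(x)\big)$ directly from the definition given in Section~2, and split the sum at $j=m$. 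For $1\le j\le m$ one has $(\go\gt)_j=\go_j$ and $\gs^j(\go\gt)=\gs^j(\go)\gt$, hence $\vp_{\gs^j(\go\gt)}=\vp_{\gs^j(\go)}\circ\vp_\gt$, so these terms reassemble into $S_\go(F)(\vp_\gt(x))$. For $j=m+i$ with $1\le i\le n$ one has $(\go\gt)_{m+i}=\gt_i$ and $\gs^{m+i}(\go\gt)=\gs^i(\gt)$, so these terms reassemble into $S_\gt(F)(x)$. Adding the two pieces gives the identity.

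The second step is immediate: exponentiating,
\[
	\exp\big(S_{\go\gt}(F)(x)\big)=\exp\big(S_\go(F)(\vp_\gt(x))\big)\cdot\exp\big(S_\gt(F)(x)\big).
\]
Now I apply the second (``in particular'') assertion of Lemma~\ref{lemma92} to the word $\go$ at the point $\vp_\gt(x)\in X$, which gives $\exp\big(S_\go(F)(\vp_\gt(x))\big)\ge C^{-1}\|\exp(S_\go(F))\|$, and again to the word $\gt$ at the point $x\in X$, which gives $\exp\big(S_\gt(F)(x)\big)\ge C^{-1}\|\exp(S_\gt(F))\|$. Multiplying these two lower bounds yields
\[
	\exp\big(S_{\go\gt}(F)(x)\big)\ge C^{-2}\,\|\exp(S_\go(F))\|\,\|\exp(S_\gt(F))\|,
\]
which is exactly the claimed inequality; note that the variable $y$ appearing in the statement plays no role and may be ignored.

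The only genuine work here is the index bookkeeping in the cocycle identity — in particular verifying $\gs^j(\go\gt)=\gs^j(\go)\gt$ for $j\le|\go|$ and matching up the letters $(\go\gt)_j$ with those of $\go$ and $\gt$ — and this is entirely routine once the shift and concatenation conventions of Section~2 are unwound; no new idea beyond Lemma~\ref{lemma92} is needed.
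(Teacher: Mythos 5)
Your proof is correct, and it follows exactly the route the paper intends (it only cites \cite[Lemma~3.3]{R1} and notes the result is deduced from Lemma~\ref{lemma92}): the cocycle identity $S_{\go\gt}(F)(x)=S_\go(F)(\vp_\gt(x))+S_\gt(F)(x)$ combined with two applications of the second assertion of Lemma~\ref{lemma92}, at $\vp_\gt(x)$ and at $x$. Your index bookkeeping for the identity, and your remark that the variable $y$ in the statement is vacuous, are both accurate.
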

Let us now consider a two-parameter family of H\"{o}lder continuous functions
\begin{align*}
	G_{q, t}=\set{g^{(i)}(q, t):=q f^{(i)} + t\log |\vp_i'|}_{i \in I}.
\end{align*}
The \tit{topological pressure} corresponding to $G_{q, t}$ is given by
\begin{equation} \label{eq1}
	P(q, t)=\lim_{n\to \infty} \frac 1 n\log\sum_{\go \in I^n}\|\exp(S_\go(F))\|^q\|\vp_\go'\|^t.
\end{equation}
The limit above exists by the standard theory of subadditive
sequences. For $q=0$ this simply means that the system $\Phi$ is strongly regular;
see \cite{MU} for a detailed discussion of this concept. Let
\begin{align*}
	\te{Fin}(q)=\set{t \in \D R : \C L_{G_{q, t}}(1\!\!1) <\infty}
	=\set{ t \in \D R : P(q, t)<\infty} \te{ and } \gq(q)=\te{inf Fin}(q).
\end{align*}
Notice that either $\te{Fin}(q)=(\theta(q),+\infty)$, or $\te{Fin}(q)=[\theta(q),+\infty)$. We assume that for every $q\in [0,1]$ there exists $u\in (\gq(q), +\infty)$ such that
\begin{equation}\label{eqmu1}
0<P(q,u)<+\infty.
\end{equation}

The following lemma is easy to prove (see \cite[Lemma 7.1]{HMU}).

\begin{lemma}
For every $q\in \D R$ the function $(\theta(q),+\infty)\ni t\mapsto P(q,
t)$ is strictly decreasing, convex and hence continuous, and $P(q,t)\to-\infty$ as $t\to\infty$.
\end{lemma}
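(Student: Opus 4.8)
The plan is to fix $q\in\D R$ and to imitate, with the extra parameter $q$ held frozen, the argument behind \cite[Proposition~3.3]{MU1} quoted above. Write, for $n\geq1$,
\[
	Z_n(q,t):=\sum_{\go\in I^n}\|\exp(S_\go(F))\|^q\|\vp_\go'\|^t,
\]
so that $P(q,t)=\lim_{n\to\infty}\frac1n\log Z_n(q,t)$, the limit existing by the remark following \eqref{eq1}. The one preliminary point to settle is that on $(\gq(q),+\infty)\ci\te{Fin}(q)$ each $Z_n(q,t)$ is finite: this is immediate for $n=1$ from $t\in\te{Fin}(q)$, and for general $n$ it follows from the almost-submultiplicativity
\[
	Z_{n+m}(q,t)\leq A(q,t)\,Z_n(q,t)\,Z_m(q,t),\qquad A(q,t)\geq1,
\]
obtained from the cocycle identity $S_{\go\gt}(F)=\big(S_\go(F)\circ\vp_\gt\big)+S_\gt(F)$, Lemma~\ref{lemma921}, the chain rule $(\vp_{\go\gt})'=(\vp_\go'\circ\vp_\gt)\cdot\vp_\gt'$, and the Bounded Distortion Property; here $A(q,t)$ absorbs a power $C^{2|q|}$ and a power of $K$, with the signs of the exponents dictated by the signs of $q$ and $t$ (these constants drop out after dividing by $n$).

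For strict monotonicity and for the behavior at $+\infty$, I would invoke $\|\vp_\go'\|\leq s^{|\go|}$ with $0<s<1$: if $\gq(q)<t<t'$ and $\go\in I^n$, then $\|\vp_\go'\|^{t'}\leq s^{n(t'-t)}\|\vp_\go'\|^{t}$, hence $Z_n(q,t')\leq s^{n(t'-t)}Z_n(q,t)$; applying $\frac1n\log$ and letting $n\to\infty$ gives
\[
	P(q,t')\leq P(q,t)+(t'-t)\log s<P(q,t),
\]
since $\log s<0$. Fixing $t_0\in(\gq(q),+\infty)$ and letting $t'\to\infty$ in the same inequality (with $t$ replaced by $t_0$) yields $P(q,t')\leq P(q,t_0)+(t'-t_0)\log s\to-\infty$, which is the final assertion. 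For convexity, fix $t_1,t_2\in(\gq(q),+\infty)$, $\gl\in[0,1]$, and set $t=\gl t_1+(1-\gl)t_2\in(\gq(q),+\infty)$; writing each summand of $Z_n(q,t)$ as
\[
	\|\exp(S_\go(F))\|^q\|\vp_\go'\|^{t}=\big(\|\exp(S_\go(F))\|^q\|\vp_\go'\|^{t_1}\big)^{\gl}\big(\|\exp(S_\go(F))\|^q\|\vp_\go'\|^{t_2}\big)^{1-\gl}
\]
and applying H\"older's inequality with exponents $1/\gl$ and $1/(1-\gl)$ (legitimate because $Z_n(q,t_1),Z_n(q,t_2)<\infty$) gives $Z_n(q,t)\leq Z_n(q,t_1)^{\gl}Z_n(q,t_2)^{1-\gl}$; passing to $\frac1n\log(\cdot)$ and then to the limit produces $P(q,t)\leq\gl P(q,t_1)+(1-\gl)P(q,t_2)$. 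Continuity on $(\gq(q),+\infty)$ is then automatic, since a real-valued convex function on an open interval is continuous.

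I do not expect a genuine obstacle here, the scheme being the one already carried out for $P(t)$ in \cite{MU1}. The only steps that call for a little attention are the two bookkeeping points: tracking in $A(q,t)$ the distortion constants with the correct sign of their exponents when $q$ or $t$ is negative (harmless, as noted, after the normalization by $n$), and invoking H\"older's inequality only where every partition function $Z_n(q,\cdot)$ is finite, i.e.\ strictly inside $\te{Fin}(q)$, which is exactly why the lemma is stated on the open half-line $(\gq(q),+\infty)$ rather than on all of $\te{Fin}(q)$.
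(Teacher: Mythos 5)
Your proof is correct; the paper itself gives no argument here, simply citing \cite[Lemma 7.1]{HMU}, and what you have written is exactly the standard argument behind that citation: finiteness of the partition functions on $(\theta(q),+\infty)$ via almost-submultiplicativity, strict monotonicity and the decay $P(q,t)\to-\infty$ from $\|\vp_\go'\|\le s^{|\go|}$, convexity from H\"older's inequality, and continuity as a consequence of convexity on an open interval. The only point you leave tacit is that $P(q,t)>-\infty$ for each finite $t$ (needed for ``strictly decreasing'' to have content and for the convexity-implies-continuity step), which follows at once by bounding $Z_n(q,t)$ from below by its single summand $\go=1^n$ together with Lemma~\ref{lemma921} and the Bounded Distortion Property.
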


\noindent With the use of \eqref{eqmu1}, the proof of \cite[Lemma 7.2]{HMU} gives
the following.

\begin{lemma}
If $q \in [0,1]$, then there exists a unique $t=\gb(q)\in (\gq(q), +\infty)$ such
that $P(q, \gb(q))=0$.
\end{lemma}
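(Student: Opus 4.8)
The plan is to read off both claims from the structure of the map $t\mapsto P(q,t)$ established in the immediately preceding lemma, using the strong-regularity hypothesis \eqref{eqmu1} only to locate a point where the pressure is positive. Fix $q\in[0,1]$. Recall that $(\gq(q),+\infty)\ci\te{Fin}(q)$, so $P(q,t)$ is finite for every $t>\gq(q)$, and on this half-line $t\mapsto P(q,t)$ is strictly decreasing, convex, continuous, and tends to $-\infty$ as $t\to\infty$.

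\emph{Existence.} By \eqref{eqmu1} there is some $u\in(\gq(q),+\infty)$ with $P(q,u)>0$. Since $P(q,t)\to-\infty$ as $t\to\infty$, we may pick $w>u$ with $P(q,w)<0$. On the interval $[u,w]\ci(\gq(q),+\infty)$ the function $t\mapsto P(q,t)$ is finite and continuous, so by the intermediate value theorem there exists $\gb(q)\in(u,w)\ci(\gq(q),+\infty)$ with $P(q,\gb(q))=0$. \emph{Uniqueness.} If $t_1<t_2$ both lay in $(\gq(q),+\infty)$ and satisfied $P(q,t_i)=0$, then strict monotonicity would force $0=P(q,t_1)>P(q,t_2)=0$, a contradiction; hence $\gb(q)$ is the unique zero of $P(q,\cdot)$ on $(\gq(q),+\infty)$.

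The single place where care is needed is the very first step: one must know that $P(q,\cdot)$ actually attains a positive value somewhere on $(\gq(q),+\infty)$. This is not automatic — in the ``$q$-irregular'' situation the pressure can be non-positive throughout that interval, so that no zero exists in $(\gq(q),+\infty)$ (the candidate exponent degenerates to $\gq(q)$). Hypothesis \eqref{eqmu1} was imposed precisely to rule this out for all $q\in[0,1]$, and with it in hand the argument is exactly that of \cite[Lemma 7.2]{HMU}.
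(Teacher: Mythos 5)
Your proof is correct and is essentially the paper's argument: the paper itself writes no details, simply invoking \cite[Lemma 7.2]{HMU} together with the standing hypothesis \eqref{eqmu1}, and the intermediate-value-plus-strict-monotonicity argument you spell out (using the preceding lemma's continuity, strict decrease, and $P(q,t)\to-\infty$) is exactly what that citation encodes. Your closing remark correctly identifies the role of \eqref{eqmu1}: it is the only non-automatic ingredient, ruling out the degenerate case where $P(q,\cdot)\le 0$ on all of $(\gq(q),+\infty)$.
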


\noindent By \cite[Theorem 7.4]{HMU}, the function $\gb$ is strictly
decreasing, convex and hence continuous on $[0,1]$. This
function is commonly called the \tit{temperature function} of the
thermodynamic formalism under consideration.

\begin{note}
Since the system $\Phi$ is strongly regular $\gb(0)=\te{dim}_\te{H}(J)$, where  $\te{dim}_\te{H}(J)$ denotes the Hausdorff dimension of the
limit set $J$ (see \cite{HMU}). Moreover, $P(1, 0)=0$, which gives
$\gb(1)=0$.
\end{note}

In the next sections we state and prove the main result of the paper. In addition, we also give some connections between the quantization processes of the infinite conformal iterated function systems and its truncated systems.

\section{Main result}

Consider the function $g : (0, 1] \to \D R$ defined, for an arbitrary $r \in (0, \infty)$,  by the following formula:
$$
g(x)=\frac{\gb(x)}{rx}.
$$
We know that $\gb(1)=0$ and
  $\gb(0)=\te{dim}_\te{H}(J)$, and so $g(1)=0$ and $\lim_{x\to 0+}
  g(x)=+\infty$. Moreover, the function $g$ is continuous, even
  real-analytic (see \cite{HMU} for the function $\beta$), and strictly decreasing (calculate its derivative
  which is negative since $\beta'<0$)
   on $(0, 1]$. Hence, there exists a unique $q_r \in (0,
  1)$ such that $g(q_r)=1$, i.e.,
$$
\gb(q_r)=rq_r.
$$

The relationship between the quantization dimension function and the
temperature function $\gb(q)$ for the $F$-conformal measure $m_F$, where
the temperature function is the Legendre transform of the $f(\ga)$
curve (for the definitions of $f(\ga)$ and the Legendre transform see
\cite{F, HMU}) is given by the following theorem which constitutes the
main result of this paper. For a graphical description see Figure~\ref{Fig1}.

\begin{theorem} \label{theorem}
Let $m_F$ be the $F$-conformal measure associated with the
infinite family of strongly H\"{o}lder  functions $F:=\set{f^{(i)} : X
  \to \D R}_{i\in I}$ and the
infinite conformal iterated function system $\Phi:=\set{\vp_i : i\in I}$ satisfying the strong open set condition. Let $P(q, t)$ be the corresponding topological pressure such that for each $q\in [0,1]$ there exists $u\in (\gq(q), +\infty)$ with $0<P(q, u)<+\infty$. Then, for each $r
\in (0, +\infty)$ the
quantization dimension (of order $r$) of the probability measure $m_F$
is given by
\[
	D_r(m_F)=\frac{\gb(q_r)}{1-q_r},
\]
where, we recall $\beta$ is the temperature function.
\end{theorem}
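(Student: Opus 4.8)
The strategy follows the classical Graf–Luschgy / Lindsay–Mauldin scheme: reduce the quantization problem to a self-similar-type relation for the error $e_{n,r}$ by exploiting the conformal structure of $m_F$, then balance the resulting sum using the temperature function. The key identity is that for any word $\gt\in I^*$ and any $n$-optimal set $\ga_n$ for $m_F$, the conformality relation $m_F(\vp_\gt(A))=\int_A\exp(S_\gt(F))\,dm_F$ together with Lemma~\ref{lemma5566} and Lemma~\ref{lemma92} gives a two-sided comparison
\[
\|\exp(S_\gt(F))\|\,\|\vp_\gt'\|^r\, V_{k,r}(m_F)\ \comp\ \text{(contribution of the piece }J_\gt\text{ to }V_{n,r}(m_F)),
\]
up to the constants $C$, $\tilde K$, $K$. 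Summing over a suitable finite antichain $\Gamma\ci I^*$ that partitions $J$ (mod $m_F$) and optimizing the allocation of the $n$ points among the pieces, one is led to the quantity $\sum_{\gt\in\Gamma}\big(\|\exp(S_\gt(F))\|\,\|\vp_\gt'\|^r\big)^{s/(r+s)}$, and the critical exponent $s=D_r$ is precisely the one making this sum comparable to $1$ over all such antichains $\Gamma$ — which, by \eqref{eq2001} with $q_r=\frac{s}{r+s}$ and $\gb(q_r)=rq_r$, is the $s$ of the statement. Unwinding $q_r=\frac{D_r}{r+D_r}$ gives $D_r=\frac{rq_r}{1-q_r}=\frac{\gb(q_r)}{1-q_r}$.

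Concretely I would proceed in the following steps. First, establish the \emph{upper bound} $\ol D_r(m_F)\le \frac{\gb(q_r)}{1-q_r}$: fix $s>D_r$ (so that the pressure-type sum over long antichains is summable), choose for large $n$ the antichain $\Gamma_n=\setm{\gt\in I^*}{\|\exp(S_{\gt^-}(F))\|\,\|\vp_{\gt^-}'\|^r> \gd \ge \|\exp(S_\gt(F))\|\,\|\vp_\gt'\|^r}$ for an appropriate threshold $\gd=\gd(n)$, verify $\Card(\Gamma_n)\lesssim n$ using the summability that comes from $P(q_r,\gb(q_r))=0$ and Lemma~\ref{lemma921}, then place one point in each $J_\gt$, $\gt\in\Gamma_n$, and bound $V_{n,r}(m_F)\le\sum_{\gt\in\Gamma_n}\|\exp(S_\gt(F))\|\,\|\vp_\gt'\|^r\,\mathrm{diam}(X)^r\lesssim \gd$, giving $V_{n,r}\lesssim n^{-r/s}$ after relating $\gd$ and $n$. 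Second, the \emph{lower bound} $\ul D_r(m_F)\ge \frac{\gb(q_r)}{1-q_r}$: here one uses the function $u_{n,r}$ and the strong open set condition to show the pieces $J_\gt$ are quantitatively separated, so an $n$-optimal set must spend at least a definite number of points near each piece in a comparably coarse antichain; a Hölder-inequality (or Lagrange-multiplier) argument on the allocation, exactly as in \cite[Section~4]{GL2} and \cite[Section~3]{LM}, yields $V_{n,r}(m_F)\gtrsim n^{-r/s}$ for every $s<D_r$. Combining the two bounds gives existence of $D_r(m_F)$ and the formula.

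The main obstacle — and the reason this is not a routine transcription of Lindsay–Mauldin — is the \emph{infiniteness of the alphabet}. Two difficulties arise: (a) the antichains $\Gamma_n$ are infinite a priori, so one must truncate, and one needs the cardinality estimate $\Card(\Gamma_n)\comp n$ to survive truncation; this is exactly where the hypothesis \eqref{eqmu1} (strong regularity, $0<P(q,u)<\infty$ for some $u>\gq(q)$) and the regularity $P(q_r,\gb(q_r))=0$ are used, to guarantee the pressure sums are genuinely finite and the comparison $\sum_{\gt\in\Gamma_n}\big(\|\exp(S_\gt(F))\|\|\vp_\gt'\|^r\big)^{q_r}\comp 1$ holds uniformly. (b) For the lower bound, one cannot assume a uniform lower contraction ratio across all $\vp_i$, so the separation of pieces must be extracted from the SOSC (via the set $U\ii J\neq\es$) together with the bounded distortion property, and the error from the "tail" pieces of small measure must be shown negligible. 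I expect the bulk of the technical work, and the place where new lemmas beyond those quoted in Section~2 are needed, to be controlling this tail and making the antichain-truncation argument uniform in $n$; the thermodynamic/Legendre bookkeeping relating $q_r$, $\gb(q_r)$, and $D_r$ is then formal.
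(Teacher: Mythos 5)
The decisive gap is the lower bound. You propose to prove $\ul D_r(m_F)\ge \gk_r$ by transplanting the Graf--Luschgy/Lindsay--Mauldin allocation argument (the quantity $u_{n,r}$, SOSC separation of the pieces, a H\"older-inequality allocation over a coarse antichain) directly to the infinite alphabet, and you yourself note that the separation and tail estimates this requires go beyond the lemmas of Section~2 and would need new machinery; the proposal does not supply it. In an infinite system there is no uniform lower bound on the $\|\vp_i'\|$ and no positive lower bound on the gaps between the pieces $\vp_i(U)$, so the step ``each optimal point can serve only boundedly many pieces'' of \cite{GL2,LM} has no direct analogue, and controlling the tail $\sum_{i>N}$ of the pieces is exactly where your sketch stops. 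The paper avoids this issue entirely: it truncates to the finite subsystems $\Phi_M$, invokes Lindsay--Mauldin's finite-alphabet theorem to get $D_r(m_M)=\gk_{r,M}$, proves $\gb_M\to\gb$ uniformly on $[0,1]$ via Dini and hence $\gk_{r,M}\to\gk_r$ (Lemmas~\ref{lemma104} and~\ref{lemma42}), and transfers the bound through the comparison $V_{n,r}(m_M)\le V_{n,r}(m_F)$ (Lemma~\ref{lemma44}, Proposition~\ref{prop33}). Without either carrying out your separation/tail analysis or an approximation scheme of this kind, half of the theorem is missing.

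The upper bound sketch also has a concrete defect as stated. For a countably infinite alphabet there are no finite maximal antichains in $I^*$: your threshold family $\Gamma_n$ is infinite (all but finitely many single letters already fall below any threshold $\gd$), and its members' weights $\|\exp(S_\gt(F))\|\,\|\vp_\gt'\|^r$ have no uniform lower bound, so the bounded sum $\sum_{\gt\in\Gamma_n}\big(\|\exp(S_\gt(F))\|\|\vp_\gt'\|^r\big)^{q_r}\le \mathrm{const}$ does not give $\Card(\Gamma_n)\lesssim n$, and appealing to \eqref{eqmu1} does not repair this. The paper's proof of Claim~\ref{prop32} instead runs the antichain construction inside a truncated system $I_N^*$, where the antichain is finite, the smallest of the first $N$ maps contributes a positive constant $\rho_N$ restoring the per-word lower bound, and the monotonicity $P_N\le P$ (hence $\gk_{r,N}/(r+\gk_{r,N})\le q_r$) allows the estimate to be carried out at the exponent $q_r$, yielding the stronger statement $\limsup_n nV_{n,r}^{\gk_r/r}(m_F)<\infty$. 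In short, your thermodynamic bookkeeping relating $q_r$, $\gb(q_r)$ and $D_r$ matches the paper, but both halves of the argument need the finite-truncation mechanism the paper actually uses (or a genuine substitute), and for the lower bound none is given.
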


\begin{figure}
\includegraphics[width=4in]{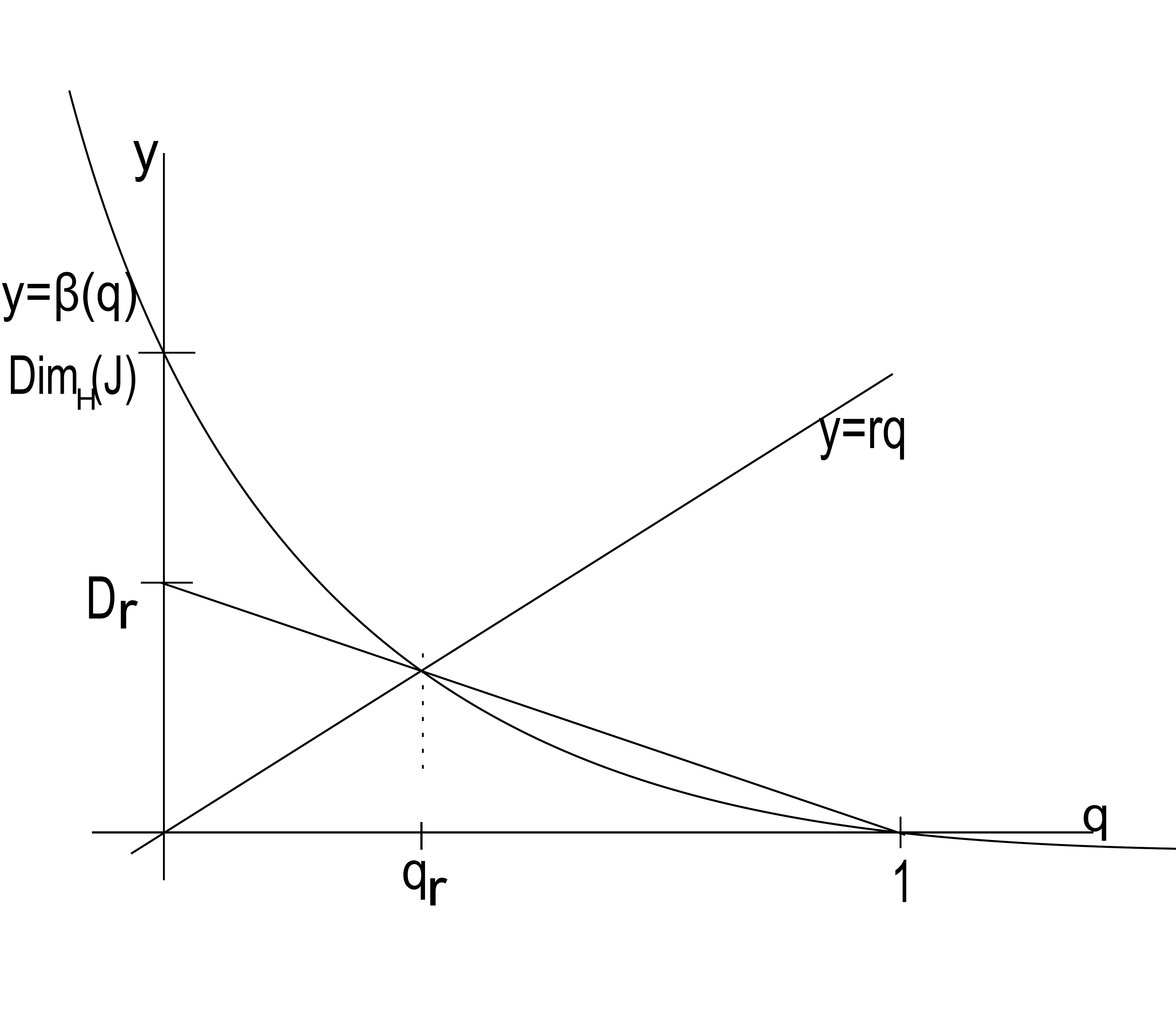}
\caption {\label{Fig1.}
To determine $D_r$ first find the point of intersection of $y=\gb(q)$
and the line $y=rq$. Then, $D_r$ is the $y$-intercept of the line
through this point and the point (1, 0).} \label{Fig1}
\end{figure}

To prove the theorem we need to prove some lemmas.
\begin{lemma} \label{lemma101}
Let $0<r<+\infty$ be given. Then, there exists exactly one number
$\gk_r \in (0, +\infty)$ such that
\[
	P\left(\frac{\gk_r}{r+\gk_r}, \frac{r\gk_r}{r+\gk_r}\right)=0.
\]
\end{lemma}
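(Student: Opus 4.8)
The plan is to reduce Lemma~\ref{lemma101} to the properties of the temperature function $\gb$ and of the pressure $P(q,t)$ that have already been established in the excerpt. The key observation is that the equation $P\left(\frac{\gk}{r+\gk},\frac{r\gk}{r+\gk}\right)=0$ is, under the substitution $q=\frac{\gk}{r+\gk}$, precisely the equation $\gb(q)=rq$. Indeed, as $\gk$ ranges over $(0,+\infty)$, the quantity $q=\frac{\gk}{r+\gk}$ ranges bijectively and increasingly over $(0,1)$, with the inverse given by $\gk=\frac{rq}{1-q}$; and under this correspondence $\frac{r\gk}{r+\gk}=rq$. So I would first record this change of variables explicitly, noting that $\gk\mapsto\frac{\gk}{r+\gk}$ is a strictly increasing homeomorphism of $(0,+\infty)$ onto $(0,1)$.

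Next I would invoke the analysis of the function $g(x)=\gb(x)/(rx)$ carried out just before the statement of Theorem~\ref{theorem}: $g$ is continuous and strictly decreasing on $(0,1]$ with $g(1)=0$ and $\lim_{x\to 0^+}g(x)=+\infty$, so there is a unique $q_r\in(0,1)$ with $g(q_r)=1$, equivalently $\gb(q_r)=rq_r$. I must check that this $q_r$ actually lies in the domain where the pressure equation makes sense, i.e. that $q_r\in(0,1)$ forces $\frac{r\gk_r}{r+\gk_r}=rq_r=\gb(q_r)\in(\gq(q_r),+\infty)$; but this is exactly the content of the lemma stating that for $q\in[0,1]$ the value $\gb(q)$ is the unique $t\in(\gq(q),+\infty)$ with $P(q,t)=0$, which applies since $q_r\in[0,1]$. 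Setting $\gk_r:=\frac{rq_r}{1-q_r}\in(0,+\infty)$ then gives $P\left(\frac{\gk_r}{r+\gk_r},\frac{r\gk_r}{r+\gk_r}\right)=P(q_r,rq_r)=P(q_r,\gb(q_r))=0$, establishing existence.

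For uniqueness, suppose $\gk\in(0,+\infty)$ satisfies $P\left(\frac{\gk}{r+\gk},\frac{r\gk}{r+\gk}\right)=0$. Put $q=\frac{\gk}{r+\gk}\in(0,1)$ and $t=\frac{r\gk}{r+\gk}=rq$. Since $P(q,t)=0<+\infty$, we have $t\in\te{Fin}(q)$, hence $t\ge\gq(q)$; if $t=\gq(q)$ then $P(q,\cdot)$ would be $+\infty$ or the boundary case and in any event the lemma guaranteeing a unique zero at $\gb(q)\in(\gq(q),+\infty)$ forces $t>\gq(q)$ and $t=\gb(q)$. Thus $\gb(q)=rq$, i.e. $g(q)=1$, so by the uniqueness of $q_r$ we get $q=q_r$, and then $\gk=\frac{rq}{1-q}=\gk_r$. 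This finishes uniqueness.

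I expect the only real subtlety — the "main obstacle" — to be the bookkeeping at the boundary of $\te{Fin}(q)$: one must be careful that the $t$-coordinate $\frac{r\gk}{r+\gk}$ genuinely lands in the open interval $(\gq(q),+\infty)$ where the earlier lemmas (strict monotonicity and convexity of $t\mapsto P(q,t)$, uniqueness of the zero $\gb(q)$) are stated, rather than merely in the half-line $\te{Fin}(q)$ which may or may not include its endpoint. This is handled cleanly once one notes that $P(q,t)=0$ is finite, so $t\in\te{Fin}(q)$, and that the finitely-valued zero of the convex strictly decreasing function $P(q,\cdot)$ must be an interior point of $\te{Fin}(q)$ (since near $\gq(q)$ either $P=+\infty$ or, in the closed case, $P(q,\gq(q))>0$ by strict decrease toward the negative values it eventually attains by the hypothesis \eqref{eqmu1}), so it equals $\gb(q)$. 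Everything else is the elementary change of variables and an appeal to the strict monotonicity of $g$.
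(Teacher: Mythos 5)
Your proof is correct and follows essentially the same route as the paper, whose one-line proof simply takes $\gk_r:=\frac{rq_r}{1-q_r}$ and appeals to the definition and uniqueness of $q_r$ (i.e.\ $\gb(q_r)=rq_r$) via exactly the change of variables $q=\frac{\gk}{r+\gk}$ that you make explicit. Your additional care about the zero of $P(q,\cdot)$ lying in the open interval $(\gq(q),+\infty)$, which the paper leaves implicit, is a correct and harmless elaboration, not a different approach.
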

\begin{proof} Just take $\gk_r:=\frac{rq_r}{1-q_r}$ and apply the
definition of $q_r$.
\end{proof}

\begin{remark} \label{rem889}
We postpone the proof of Theorem \ref{theorem} for Section 4, and now give some approximation results for the temperature and quantization dimension in the infinite case by using \textit{truncated finite systems}.

For arbitrary $M\geq 2$ write $I_M=\set{1, 2, \cdots, M}$, and consider
the partial iterated function system
\begin{align*}
	\Phi_M=\set{\vp_i : X \to X : i \in I_M}
\end{align*}
of the infinite system $\Phi$. Let $J_M$ be its limit set. Consider also the partial H\"older
family of functions
\begin{align*}
	F_M=\set{f^{(i)} : X \to \D R : i \in I_M}
\end{align*}
of the infinite family $F$. Let $m_M$ be the corresponding
$F_M$-conformal measure on $J_M$. Furthermore, let $P_M(q, t)$ be the
topological pressure given by
\begin{align}\label{j eq sub pressure def}
P_M(q,t):=\lim_{n\to \infty} \frac 1 n\log\sum_{\go \in I_M^n}\|\exp(S_\go(F))\|^q\|\vp_\go'\|^t
\end{align}
and $\beta_M(q)$ be the
temperature function associated with the system $\Phi_M$, that is,
\begin{align*}
	P_M(q,\bt_M(q))=0.
\end{align*}
Note that for each $M\geq 2$, $P_M(q, t)$ is strictly decreasing, convex and hence continuous in each variable $q, t \in \D R$ separately and $P_M(q,t)\to-\infty$ as $t\to\infty$ (see [F1, P]). Furthermore, we note that
\begin{align*}
	P_M(q,t)\leq P(q,t).
\end{align*}
\end{remark}
The following lemma is a special case of Lemma~\ref{lemma101}.
\begin{lemma} \label{lemma102}
Let $0<r<+\infty$ and $M\ge 2$ be as before. Then, there exists exactly
one number $\gk_{r, M} \in (0, +\infty)$ such that
\[
	P_M\left(\frac{\gk_{r, M}}{r+\gk_{r, M}}, \frac{r\gk_{r, M}}{r+\gk_{r, M}}\right)=0.	
\]
\end{lemma}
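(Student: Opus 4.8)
The plan is to reduce Lemma~\ref{lemma102} to the already-established Lemma~\ref{lemma101}, but applied to the finite (truncated) system $\Phi_M$ together with the finite H\"older family $F_M$. The point is that $\Phi_M$ is itself a conformal iterated function system over the finite alphabet $I_M$, and $F_M$ is a summable H\"older family over $I_M$ (summability being automatic for a finite alphabet). To invoke Lemma~\ref{lemma101} verbatim we must first know that the hypothesis \eqref{eqmu1} holds for $\Phi_M$, i.e.\ that for every $q\in[0,1]$ there is $u\in(\gq_M(q),+\infty)$ with $0<P_M(q,u)<+\infty$. For a finite alphabet this is immediate: $\gq_M(q)=0$ for all $q$ (finitely many conformal maps and bounded potentials make every partial sum finite), so $P_M(q,t)<+\infty$ for all $t\geq 0$; moreover $P_M(q,0)=\log M>0$ for $M\geq 2$, so one may take any small $u>0$ and use continuity to get $0<P_M(q,u)<+\infty$. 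Alternatively one simply remarks that the construction in the proof of Lemma~\ref{lemma101} only used the existence, continuity, strict monotonicity and the limiting behavior of the function $q\mapsto\gb(q)$ and the companion function $g(x)=\gb(x)/(rx)$, all of which hold equally for $\gb_M$ in place of $\gb$.

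First I would record that, by the Proposition of \cite{MU1} and the lemmas preceding the Note (applied to $\Phi_M$, $F_M$), the temperature function $\gb_M$ is well defined, strictly decreasing, convex and continuous on $[0,1]$, with $\gb_M(0)=\dim_{\te H}(J_M)$ and $\gb_M(1)=0$ (since $P_M(1,0)=0$ as well, because $m_M$ is a probability measure on $J_M$). Next I would form $g_M(x):=\gb_M(x)/(rx)$ on $(0,1]$; exactly as in the discussion preceding Theorem~\ref{theorem}, $g_M$ is continuous, strictly decreasing, with $g_M(1)=0$ and $g_M(x)\to+\infty$ as $x\to0^+$ (the latter because $\gb_M(0)=\dim_{\te H}(J_M)>0$ once $M\geq 2$). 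By the intermediate value theorem there is a unique $q_{r,M}\in(0,1)$ with $g_M(q_{r,M})=1$, i.e.\ $\gb_M(q_{r,M})=r\,q_{r,M}$. Finally I would set $\gk_{r,M}:=\dfrac{r\,q_{r,M}}{1-q_{r,M}}\in(0,+\infty)$ and verify, by the same algebra as in the proof of Lemma~\ref{lemma101}, that $\dfrac{\gk_{r,M}}{r+\gk_{r,M}}=q_{r,M}$ and $\dfrac{r\gk_{r,M}}{r+\gk_{r,M}}=r\,q_{r,M}=\gb_M(q_{r,M})$, whence $P_M\!\left(\dfrac{\gk_{r,M}}{r+\gk_{r,M}},\dfrac{r\gk_{r,M}}{r+\gk_{r,M}}\right)=P_M(q_{r,M},\gb_M(q_{r,M}))=0$. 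Uniqueness of $\gk_{r,M}$ follows from uniqueness of $q_{r,M}$ together with the fact that $t\mapsto\gk/(r+\gk)$ is a bijection from $(0,+\infty)$ onto $(0,1)$.

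I do not expect any genuine obstacle here; the statement is, as the text says, ``a special case'' of Lemma~\ref{lemma101}, and the only thing that needs a sentence of justification is that the standing assumption \eqref{eqmu1} and the regularity/strong-regularity hypotheses transfer to the finite subsystem $\Phi_M$ — which is routine because finiteness of the alphabet trivializes the summability and finiteness-of-pressure conditions, and $P_M(q,0)=\log M>0$ gives strong regularity of $\Phi_M$. The cleanest writeup is therefore a two-line proof: ``Apply Lemma~\ref{lemma101} to the finite system $\Phi_M$ and family $F_M$, noting that \eqref{eqmu1} holds trivially for finite alphabets; concretely take $\gk_{r,M}:=\frac{r q_{r,M}}{1-q_{r,M}}$ where $q_{r,M}\in(0,1)$ is the unique solution of $\gb_M(q)=rq$.''
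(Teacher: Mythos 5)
Your overall route is the paper's route: the paper offers no separate argument for Lemma~\ref{lemma102}, treating it as the statement of Lemma~\ref{lemma101} applied to the truncated system, i.e.\ find the unique $q_{r,M}\in(0,1)$ with $\gb_M(q_{r,M})=rq_{r,M}$ and set $\gk_{r,M}=\frac{rq_{r,M}}{1-q_{r,M}}$, which is exactly what you do.

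However, two of your supporting claims are false as stated. First, $P_M(q,0)=\log M$ holds only at $q=0$; for general $q$ it is the pressure of $qF$ over the finite alphabet and need not be positive. Second, and more importantly, $\gb_M(1)=0$ is not justified and is in general wrong: since $I_M^n\subset I^n$, the definition \eqref{j eq sub pressure def} gives $P_M(1,0)\le P(1,0)=P(F)=0$, and truncation typically makes this inequality strict, so $\gb_M(1)\le 0$ and usually $\gb_M(1)<0$. The fact that $m_M$ is a probability measure does not force $P_M(1,0)=0$, because the $F_M$-conformal measure is built from $F_M$ normalized by \emph{its own} pressure, while $P_M(q,t)$ in the paper uses the unnormalized restriction of $F$. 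Fortunately your argument survives with a one-line repair: you only need $\gb_M(0)=\dim_{\te{H}}(J_M)>0$ (true for $M\ge 2$ by bounded distortion, since $P_M(0,t)\ge \log M+t\log\bigl(\min_{i\le M}\inf_V|\vp_i'|\bigr)>0$ for small $t>0$) together with $\gb_M(1)\le 0$, so that $q\mapsto \gb_M(q)-rq$ is continuous, strictly decreasing, positive near $0$ and $\le -r<0$ at $q=1$; this gives the unique $q_{r,M}\in(0,1)$, and the rest of your algebra is fine. Relatedly, do not argue existence of $\gb_M(q)$ via ``take small $u>0$ with $P_M(q,u)>0$'': for $q$ near $1$ such positive $u$ may not exist; instead use that for a finite alphabet $P_M(q,t)$ is finite for every real $t$, strictly decreasing in $t$, tends to $+\infty$ as $t\to-\infty$ (all maps are uniform contractions) and to $-\infty$ as $t\to+\infty$, so a unique zero $\gb_M(q)\in\D R$ exists without invoking \eqref{eqmu1} at all — which is also consistent with the paper allowing $\gb_N(q)\in\D R$ (possibly negative) in the proof of Theorem~\ref{theorem}.
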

Lindsay and Mauldin showed that the above $\gk_{r, M}$ is the
quantization dimension of order $r$ of the probability measure $m_M$
(see \cite{LM}). We shall prove the following lemma.

\begin{lemma} \label{lemma104}
Let $(q_M)_{M\geq 2}$ be a sequence of elements in $[0, 1]$ such that
$q_M \to q$ for some $q \in [0, 1]$. Then,
$
	\gb_M(q_M) \to \gb(q)
$
as $M\to \infty$.
\end{lemma}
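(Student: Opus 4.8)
The plan is to prove Lemma~\ref{lemma104} by combining two monotonicity/continuity inputs: the pointwise convergence $P_M(q,t)\to P(q,t)$ for each fixed $(q,t)$ (which should follow from the definitions \eqref{j eq sub pressure def} and \eqref{eq1}, since the partial sums over $I_M^n$ increase to the sums over $I^n$, and the limit in $n$ is governed by subadditivity), together with the uniform strict monotonicity of the pressure in $t$ coming from the convexity statements already recorded. First I would fix $q\in[0,1]$ and show $\beta_M(q)\to\beta(q)$; then I would upgrade to sequences $q_M\to q$ using the equicontinuity in $q$ of the family $\{\beta_M\}$.

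For the fixed-$q$ step: given $\varepsilon>0$, I would use \eqref{eqmu1} to pick $t_-=\beta(q)+\varepsilon$ and $t_+=\beta(q)-\varepsilon$ (shrinking $\varepsilon$ so that $t_+>\theta(q)$ and $P(q,t_+)<\infty$), so that $P(q,t_+)>0>P(q,t_-)$ by strict monotonicity. Since $P_M(q,t_-)\to P(q,t_-)<0$ and $P_M(q,t_+)\to P(q,t_+)>0$, for all large $M$ we get $P_M(q,t_+)>0>P_M(q,t_-)$, whence by the intermediate value theorem and the strict monotonicity of $t\mapsto P_M(q,t)$ the zero $\beta_M(q)$ lies in $(t_+,t_-)=(\beta(q)-\varepsilon,\beta(q)+\varepsilon)$. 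This gives $\beta_M(q)\to\beta(q)$. The one subtlety to check is that $P_M(q,t_-)$ is genuinely defined (finite) and that $P_M(q,\cdot)$ is continuous down to $t_+$; this is fine because $P_M\le P$ forces $\mathrm{Fin}(q)\subseteq\mathrm{Fin}_M(q)$, so the truncated pressures are finite on a larger interval.

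To handle $q_M\to q$ rather than a fixed $q$, I would establish a uniform (in $M$) Lipschitz or modulus-of-continuity bound for $q\mapsto\beta_M(q)$ on $[0,1]$. This follows from convexity of $P_M$ in $(q,t)$ jointly (or separately in each variable, as stated in Remark~\ref{rem889}) together with a uniform lower bound on $|\partial_t P_M|$ and a uniform upper bound on $|\partial_q P_M|$ near the graph of $\beta_M$; concretely, since all $\beta_M(q)\in[\beta(1),\beta(0)]=[0,\dim_{\mathrm H}(J)]$ lie in a fixed compact interval and the relevant pressures are dominated by the infinite-system pressure $P$, one gets that the slopes are controlled uniformly in $M$, so $|\beta_M(q)-\beta_M(q')|\le L|q-q'|$ for a constant $L$ independent of $M$. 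Then $|\beta_M(q_M)-\beta(q)|\le|\beta_M(q_M)-\beta_M(q)|+|\beta_M(q)-\beta(q)|\le L|q_M-q|+|\beta_M(q)-\beta(q)|\to 0$ by the fixed-$q$ step.

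I expect the main obstacle to be the uniform equicontinuity of the family $\{\beta_M\}$: one must be careful that the derivative bounds for $P_M$ near its zero set are uniform in $M$, which requires the estimates to be phrased in terms of the infinite-system data (e.g.\ $\|\exp(S_\omega(F))\|$ and $\|\varphi_\omega'\|$) rather than the truncated ones, and requires knowing that the zeros $\beta_M(q)$ stay bounded away from the (possibly varying) abscissae of finiteness $\theta_M(q)$. An alternative, softer route that sidesteps explicit derivative bounds is to argue by contradiction and compactness: if $\beta_M(q_M)\not\to\beta(q)$, pass to a subsequence with $\beta_M(q_M)\to L\ne\beta(q)$ and $q_M\to q$, and derive $P(q,L)=0$ from $P_M(q_M,\beta_M(q_M))=0$ together with lower semicontinuity of $P$ (from the monotone convergence $P_M\uparrow P$) and an upper estimate; this contradicts uniqueness of $\beta(q)$. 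I would likely present the contradiction-via-compactness version as the clean proof, falling back on the derivative estimates only if the semicontinuity of $P$ in both variables needs a separate short argument.
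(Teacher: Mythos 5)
Your proposal is correct in substance, but it takes a partly different route from the paper, so let me compare. For fixed $q$, your intermediate-value sandwich (making $P_M(q,\beta(q)\pm\varepsilon)$ eventually straddle $0$) is essentially the paper's step in different clothing: the paper uses that $\beta_M$ increases in $M$ and is bounded above by $\beta$, sets $\hat\beta(q)=\lim_M\beta_M(q)$, and sandwiches $0=P(q,\beta(q))\le P(q,\hat\beta(q))=\lim_M P_M(q,\hat\beta(q))\le\lim_M P_M(q,\beta_M(q))=0$; both versions rest on the same inputs, namely $P_M\uparrow P$ pointwise and monotonicity of the pressure in $t$ (note that the paper, like you, takes $P_M\to P$ as known; your one-line justification hides an interchange of $\lim_n$ and $\sup_M$, which can be repaired via the almost-supermultiplicativity furnished by Lemma~\ref{lemma921} and the BDP). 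Where you genuinely diverge is the passage from fixed $q$ to $q_M\to q$: the paper applies Dini's theorem to the monotone sequence of continuous functions $\beta_M$ converging pointwise to the continuous $\beta$ on the compact set $[0,1]$, getting uniform convergence, and then concludes with continuity of $\beta$; your preferred argument is a subsequence/contradiction argument at the level of pressure zeros, which is valid and avoids uniform convergence altogether, but two points should be made precise. First, the semicontinuity must be used in the form $P(q,L)\le\liminf_M P_M(q_M,\beta_M(q_M))=0$ (fix $M_0$, use $P_{M_0}\le P_M$ for $M\ge M_0$ and continuity of the finite-system pressure $P_{M_0}$, then let $M_0\to\infty$); upper semicontinuity of $P$ is not available, so the complementary bound must come from the elementary estimate $\beta_M(q_M)\le\beta(q_M)$ together with continuity of $\beta$, giving $L\le\beta(q)$ and hence $P(q,L)\ge 0$, whence $L=\beta(q)$ by uniqueness of the zero. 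Second, your compactness interval is slightly off: since $P_M(1,0)\le P(1,0)=0$, one can have $\beta_M(1)<0$, so $\beta_M(q)\in[0,\dim_{\mathrm{H}}(J)]$ need not hold; boundedness is still true, e.g.\ $\beta_2(1)\le\beta_M(q)\le\beta(0)$, which is all the subsequence extraction needs. Your alternative route via uniform Lipschitz bounds for $\beta_M$ is indeed the delicate part you flag, and it is unnecessary: the paper's Dini argument delivers exactly the equicontinuity-type conclusion you were after at essentially no cost.
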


\begin{proof}
First observe that the sequence of functions $(\beta_M)_{M=2}^\infty$
is increasing on the compact space $[0,1]$ and that
$\beta_M(q)\le\beta(q)$ for all $q\in[0,1]$. Denote
\begin{align*}
	\hat\beta(q):=\lim_{M\to\infty}\beta_M(q).	
\end{align*}
Then, we have
$
\hat\beta(q)\le \beta(q)
$
for all $q\in[0,1]$. But,
$$
0=  P(q,\beta(q))
\le P(q,\hat\beta(q))
=   \lim_{M\to\infty} P_M(q,\hat\beta(q))
\le \lim_{M\to\infty} P_M(q,\beta_M(q))
=0.
$$
Hence, $P(q,\hat\beta(q))=0$, and therefore
$
\hat\beta(q)=\beta(q).
$
It now follows from Dini's Lemma that the sequence
$(\beta_M)_{M=2}^\infty$ converges to $\beta$ uniformly on
$[0,1]$. Moreover,  the temperature functions are continuous on $[0, 1]$. Therefore, it follows that if $q_M \to q$, then $\beta(q)=\lim_{M\to\infty}\beta_M(q_M)$.
\end{proof}

%
%

\begin{lemma} \label{lemma42}
Let $0<r<+\infty$, and let $\gk_r$ and $\gk_{r, M}$ be as in Lemma~\ref{lemma101} and Lemma~\ref{lemma102}. Then,
$
	\gk_{r, M} \to \gk_r
$
as $M \to \infty$.
\end{lemma}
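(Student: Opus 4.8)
The plan is to relate $\gk_{r,M}$ and $\gk_r$ through the temperature functions $\gb_M$ and $\gb$ using the algebraic identities recorded in Lemmas~\ref{lemma101} and \ref{lemma102}, and then invoke Lemma~\ref{lemma104} to pass to the limit. Recall from the proof of Lemma~\ref{lemma101} that $\gk_r = \frac{rq_r}{1-q_r}$, where $q_r\in(0,1)$ is the unique solution of $\gb(q_r)=rq_r$; equivalently, writing $q = \frac{\gk}{r+\gk}$, the equation $P\big(\frac{\gk}{r+\gk},\frac{r\gk}{r+\gk}\big)=0$ says exactly that $\gb(q)=rq$ at $q=q_r$. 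The same holds verbatim for the truncated system: $\gk_{r,M}=\frac{rq_{r,M}}{1-q_{r,M}}$ where $q_{r,M}\in(0,1)$ is the unique solution of $\gb_M(q_{r,M})=rq_{r,M}$. Since $q\mapsto\frac{rq}{1-q}$ is a homeomorphism of $(0,1)$ onto $(0,\infty)$, it suffices to prove that $q_{r,M}\to q_r$ as $M\to\infty$.

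First I would establish boundedness: the sequence $(q_{r,M})_{M\ge 2}$ lies in the compact interval $[0,1]$, so it has convergent subsequences, and it suffices to show that every convergent subsequence has limit $q_r$. Suppose $q_{r,M_k}\to q^\ast\in[0,1]$ along some subsequence. By Lemma~\ref{lemma104} (applied with the sequence $q_{M_k}:=q_{r,M_k}\to q^\ast$) we get $\gb_{M_k}(q_{r,M_k})\to\gb(q^\ast)$. On the other hand, by definition $\gb_{M_k}(q_{r,M_k})=rq_{r,M_k}\to rq^\ast$. Comparing the two limits yields $\gb(q^\ast)=rq^\ast$. By uniqueness of the solution of this equation (the function $g(x)=\gb(x)/(rx)$ is strictly decreasing on $(0,1]$, as noted before the statement of Theorem~\ref{theorem}, with $g(1)=0$ and $g(x)\to+\infty$ as $x\to 0^+$), we conclude $q^\ast = q_r$; note also that $q^\ast$ cannot be $0$ or $1$ since $\gb(0)=\dim_{\te H}(J)>0=r\cdot 0$ and $\gb(1)=0\ne r$. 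Hence every subsequential limit equals $q_r$, so $q_{r,M}\to q_r$, and therefore $\gk_{r,M}=\frac{rq_{r,M}}{1-q_{r,M}}\to\frac{rq_r}{1-q_r}=\gk_r$.

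The one point that needs a little care — and is the main obstacle — is making sure Lemma~\ref{lemma104} is genuinely applicable, i.e.\ that the $q_{r,M}$ really do stay in $[0,1]$ and that the hypotheses guaranteeing existence and uniqueness of $\gb_M(q)$ and $\gb(q)$ hold along the whole range $q\in[0,1]$; this is exactly condition \eqref{eqmu1} together with its truncated analogues, which we are assuming, and the fact that $\gb_M(q)\le\gb(q)$ and $P_M(q,t)\le P(q,t)$ recorded in Remark~\ref{rem889}. One should also double-check that $q_{r,M}\in(0,1)$: since $\gb_M(1)=0$ (as $P_M(1,0)\le P(1,0)=0$ and in fact $P_M(1,0)=0$ for $M$ large once $\sum_{i\le M}\|e^{f^{(i)}}\|$ is close enough to $1$ — alternatively one argues directly from $\gb_M(1)\le\gb(1)=0$ and $\gb_M(1)\ge 0$) and $\gb_M(0)=\dim_{\te H}(J_M)>0$ for $M$ sufficiently large, the strictly decreasing function $x\mapsto\gb_M(x)/(rx)$ again crosses the value $1$ exactly once in $(0,1)$. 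With these routine verifications in place, the convergence $\gk_{r,M}\to\gk_r$ follows immediately from the subsequence argument above.
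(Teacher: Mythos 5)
Your argument is correct and is essentially the paper's own proof: you reduce to showing $q_{r,M}\to q_r$, pass to an arbitrary convergent subsequence in $[0,1]$, apply Lemma~\ref{lemma104} to get $\gb(\hat q)=r\hat q$, and conclude by uniqueness of $q_r$. The only difference is that you spell out the routine verifications (that every subsequential limit is $q_r$, that $q_{r,M}\in(0,1)$, and that the hypotheses of Lemma~\ref{lemma104} apply), which the paper leaves implicit.
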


\begin{proof}
Let $q_{r, M}=\frac{\gk_{r, M}}{r+\gk_{r, M}}$ and
$q_r=\frac{\gk_r}{r+\gk_r}$. It is enough to prove $q_{r, M} \to q_r$
as $M\to \infty$. Let $(q_{r, M_K})_{k\geq 1}$ be a subsequence of
$(q_{r, M})_{M\geq 2}$ such that $q_{r, M_k} \to \hat q$ for some
$\hat q \in [0, 1]$. Then, by Lemma~\ref{lemma104},
\[
	r\hat q =\lim_{k\to \infty} r q_{r, M_k} =\lim_{k\to \infty} \gb_{M_k}(q_{r, M_k})=\gb(\hat q).
\]
So, by uniqueness of $q_r$, this implies that $\hat q=q_r$. Hence,
$q_{r, M} \to q_r$ as $M\to \infty$, i.e., $\gk_{r, M} \to \gk_r$ as
$M \to \infty$.
\end{proof}

\begin{lemma} Let $m_M$ and $m_F$ be as defined before. Then, $m_M$
  converges weakly to $m_F$ as $M\to \infty$.
\end{lemma}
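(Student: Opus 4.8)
The plan is to show that $m_M \to m_F$ weakly by testing against an arbitrary $g \in \C C(X)$ and exploiting the conformality equation satisfied by each measure together with the fact that $m_F$ is (almost) supported on $J$ while the $m_M$ live on the $J_M \subseteq J$, which exhaust $J$ in a suitable sense. First I would fix $g \in \C C(X)$ and $\ep > 0$, and pick $N$ large enough that the tail contribution $\sum_{i > N} \|e^{f^{(i)}}\|$ is small; since $F$ is a summable H\"older family this is possible. Then for $M \ge N$, I would iterate the conformality relation once for both $m_M$ and $m_F$:
\[
\int g \, dm_F = \sum_{i \in I} \int e^{f^{(i)}(x)} g(\vp_i(x)) \, dm_F(x), \qquad \int g \, dm_M = \sum_{i \in I_M} \int e^{f^{(i)}(x)} g(\vp_i(x)) \, dm_M(x).
\]
Comparing term by term for $i \le N$ and bounding the tails $i > N$ (respectively $N < i \le M$) by $\|g\| \sum_{i>N}\|e^{f^{(i)}}\|$, the problem reduces to controlling $\big|\int e^{f^{(i)}} (g \circ \vp_i)\, dm_F - \int e^{f^{(i)}} (g\circ\vp_i)\, dm_M\big|$ for the finitely many $i \le N$.

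To handle those finitely many terms I would iterate $n$ times instead of once. For a fixed depth $n$, the $F$-conformality gives $\int h \, dm_F = \sum_{\go \in I^n} \int e^{S_\go(F)(x)} h(\vp_\go(x))\, dm_F(x)$, and similarly for $m_M$ with the sum restricted to $\go \in I_M^n$. Because $\te{diam}(\vp_\go(X)) \le s^n \te{diam}(X)$ by \eqref{eq113}, for $n$ large the function $x \mapsto h(\vp_\go(x))$ is nearly constant (to within the modulus of continuity of $h$ on a set of diameter $\le s^n\,\te{diam}(X)$), so both integrals are, up to $\ep$, equal to $\sum_{\go} (\text{value of } h \text{ near } \vp_\go(X))\cdot \int e^{S_\go(F)}\, dm_\bullet$; and $\int e^{S_\go(F)(x)}\, dm_F(x) = m_F(\vp_\go(X))$ by the displayed identity with $A = X$, while likewise $\int_{} e^{S_\go(F)}\,dm_M = m_M(\vp_\go(X))$. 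One then needs $\sum_{\go \in I_M^n}$ versus $\sum_{\go \in I^n}$ to differ by little in total mass: the missing words are exactly those using a letter $> M$, and by Lemma~\ref{lemma921} (or directly by summability) $\sum_{\go \in I^n \sm I_M^n} \|e^{S_\go(F)}\| \to 0$ as $M \to \infty$ uniformly in... — more carefully, $\sum_{\go \in I^n}\|e^{S_\go(F)}\|$ is bounded (it grows subexponentially with $P(F)=0$), and the tail over words containing a large letter is small once $M$ is large. Combining, $\int h\, dm_M \to \int h\, dm_F$.

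The main obstacle is the bookkeeping in the double limit: we must choose the truncation level $M$, the iteration depth $n$, and the letter-cutoff $N$ in the right order so that the error terms — (a) the tail of the family $F$ beyond $N$, (b) the oscillation of $g$ across cylinders of depth $n$, and (c) the discrepancy between $\sum_{I_M^n}$ and $\sum_{I^n}$ of the weights $\|e^{S_\go(F)}\|$ — are all controlled simultaneously. The clean way is: given $\ep$, first use summability and $P(F)=0$ to fix $n$ so that (using uniform continuity of $g$ on the compact $X$) the oscillation term is $<\ep$ and the whole-alphabet depth-$n$ weight sum is within a known bound; then fix $M$ large enough that the depth-$n$ words using a letter $>M$ carry total weight $<\ep$; this automatically also kills the $i>N$ tail if we take $N \le M$. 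A subtlety worth a sentence: $m_F(J)=1$ but $J$ need not be compact and $m_F(\pa X)$-type boundary issues are already known to vanish ($m_F(\vp_\go(X)\cap\vp_\gt(X))=0$ for incomparable $\go,\gt$), so the depth-$n$ decomposition is a genuine partition mod $m_F$, and the same holds for each $m_M$; this is what lets us pass from the transfer-operator identity to an honest comparison of masses on cylinders.
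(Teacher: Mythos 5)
The central gap is in the step where you compare the depth-$n$ decompositions of the two integrals. After replacing $g\circ\vp_\go$ by a constant on each cylinder, you are left with $\sum_{\go\in I^n}c_\go\, m_F(\vp_\go(X))$ versus $\sum_{\go\in I_M^n}c_\go\, m_M(\vp_\go(X))$, and you treat their difference as coming only from the words that use a letter $>M$. That implicitly assumes $m_M(\vp_\go(X))\approx m_F(\vp_\go(X))$ for the common words $\go\in I_M^n$ as $M\to\infty$, which is essentially the statement being proved and is nowhere established. What the conformality relations together with Lemma~\ref{lemma92} actually give is only two-sided comparability of both cylinder masses with $\|\exp(S_\go(F))\|$ up to the uniform constant $C$; comparability within a fixed factor does not make the two weighted sums close (mass can be redistributed among cylinders by a bounded factor and change $\int g\,dm$ by a definite amount), so the final ``combining'' step does not go through. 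A second, related inaccuracy: since $P(F)=0$ was arranged by normalizing the \emph{full} family, the truncated family $F_M$ in general has $P(F_M)<0$, and its conformal measure satisfies $\C L_{F_M}^{*}m_M=e^{P(F_M)}m_M$; the identity $\int g\,dm_M=\sum_{\go\in I_M^n}\int\exp(S_\go(F))\,(g\circ\vp_\go)\,dm_M$ you use is missing the factor $e^{-nP(F_M)}$, and controlling it requires showing $P(F_M)\to P(F)=0$ as $M\to\infty$, an additional ingredient your argument does not supply.

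For comparison, the paper does not argue on cylinders at all: it lifts everything to the symbol space, introduces the amalgamated potential $f(\go)=f^{(\go_1)}(\pi(\gs(\go)))$ and its restrictions $f_M$ to $I_M^\infty$, invokes Theorem~2.7.3 and Corollary~2.7.5 of \cite{MU} to get weak convergence of the symbolic conformal measures $\tilde m_M\to\tilde m$, and then pushes forward by the continuous coding map $\pi$, identifying $\tilde m\circ\pi^{-1}=m_F$ via Theorem~3.2.3 of \cite{MU}. If you want a self-contained argument closer to your spirit, the standard repair is: since $X$ is compact, the sequence $(m_M)$ has weak-$*$ cluster points; pass to the limit in $\C L_{F_M}^{*}m_M=e^{P(F_M)}m_M$ using the tail bound $\sum_{i>M}\|e^{f^{(i)}}\|\to0$ and $P(F_M)\to0$ to show that every cluster point satisfies the $F$-conformality identity; then the uniqueness of the $F$-conformal measure forces every cluster point to equal $m_F$, hence $m_M\to m_F$ weakly. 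Once uniqueness does the identification, the oscillation and tail bookkeeping you set up is unnecessary.
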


\begin{proof}
Let $f:I^\infty\to I^\infty$ be such that
\begin{align*}
	f(\go)=f^{(\go_1)}(\pi(\gs(\go))),
\end{align*}
i.e., $f$ is an amalgamated
function corresponding to the infinite family $F$ of strongly H\"older
functions. Then, for each $M\ge 2$, $f_M:=f|_{I_M^\infty}$ is an amalgamated
function for the partial family $F_M$ of strongly H\"older  functions.  Let $\tilde m_M$ be the
conformal measure of the function $f_M$ with respect to the dynamical
system generated by the shift map $\gs_M : I_M^\infty\to
I_M^\infty$. Then, (see Theorem~3.2.3 in \cite{MU})
\begin{align*}
	m_M:=\tilde
	m_M\circ \pi_M^{-1}
\end{align*}
is the unique $F_M$-conformal measure, where $\pi_M:=\pi|_{I_M^\infty}$ is the restriction of the coding map $\pi$ on the space $I_M^\infty$. The proof of Theorem~2.7.3
along with Corollary~2.7.5 (especially its uniqueness part (a)) in \cite{MU} gives
that the sequence $(\tilde m_M)_{M=2}^\infty$ converges weakly to $\tilde
m$, the unique conformal measure for $f:I^\infty\to\R$. Since the
projection map $\pi:I^\infty\to X$ is continuous, we therefore have
that the sequence $(m_M)_{M=2}^\infty$ converges weakly to a
measure $m$ given by
\begin{align*}
	m:=\tilde m\circ \pi^{-1}.	
\end{align*}
But because of Theorem~3.2.3 in \cite{MU}
again, so defined measure
$m$ is the unique conformal measure $m_F$ for the H\"older family $F$,
i.e., $m=m_F$.
Thus, the proof of the lemma is complete.
\end{proof}
Let $\C M$ denote the set of all Borel probability measures on $X$, and $0<r<+\infty$ be fixed. Since $X$ is compact, for any Borel probability measure $\gn$ on $X$ we have $\int|x|^rd\gn(x)<\infty$. Then, we know $L_r$-minimal metric (also refereed as $L_r$-Wasserstein metric or $L_r$-Kantorovich metric) is given by
\[
	\rho_r(P_1, P_2)=\inf_\gn \left(\int |x-y|^r \,d\gn(x, y)\right)^{\frac 1 r},
\]
where the infimum is taken over all Borel probabilities $\gn$ on $X \times X$ with fixed marginals $P_1$ and $P_2$.
Again we know that in the weak topology on $\C M$,
\[
	m_M \to m_F  \Leftrightarrow \int_X f \,dm_M - \int_X f\, dm_F \to 0 \te{ for all } f \in \C C(X),
\]
where $\C C (X) : =\set{ f : X \to \D R : f \te{ is continuous }}$.
Note that weak topology and the topology induced by $L_r$-minimal metric $\rho_r$ coincide on $\C M$ (see \cite{Ru}).

Let us now state the following lemma.

\begin{lemma} (see \cite[Lemma~3.4]{GL1})  \label{lemma345}
Let $\C P_n$ denote the set of all discrete probability measures $Q$ on $X$ with $|\supp(Q)|\leq n$. Then, given $P\in\C M$ we have
\[
	V_{n, r}(P)=\inf_{Q\in \C P_n} \rho_r^r(P, Q).
\]
\end{lemma}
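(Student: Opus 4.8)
The plan is to prove Lemma~\ref{lemma345}, which identifies the $n$-th quantization error of order $r$ with the $r$-th power of the $L_r$-minimal distance from $P$ to the set of discrete measures supported on at most $n$ points. I would establish the two inequalities separately.

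\textbf{Step 1: $V_{n,r}(P) \ge \inf_{Q\in\C P_n}\rho_r^r(P,Q)$.} Fix $\ga\sci\D R^d$ with $\Card(\ga)\le n$ (and, since $\int|x|^rdP(x)<\infty$, we may by the remarks in Section~2 take $\ga$ to be an optimal set). Let $\{A_a\}_{a\in\ga}$ be a Voronoi partition of $X$ induced by $\ga$, so that $d(x,\ga)=|x-a|$ for $x\in A_a$. Define the nearest-point map $T:X\to\ga$ by $T(x)=a$ for $x\in A_a$, and set $Q:=P\circ T^{-1}$, a discrete measure with $\supp(Q)\sci\ga$, hence $Q\in\C P_n$. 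The measure $\gn:=(\mathrm{id}\times T)_*P$ on $X\times X$ has marginals $P$ and $Q$, so
\[
\rho_r^r(P,Q)\le \int|x-y|^r\,d\gn(x,y)=\int|x-T(x)|^r\,dP(x)=\int d(x,\ga)^r\,dP(x).
\]
Taking the infimum over all admissible $\ga$ gives the desired inequality.

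\textbf{Step 2: $V_{n,r}(P) \le \inf_{Q\in\C P_n}\rho_r^r(P,Q)$.} Fix $Q\in\C P_n$ with $\supp(Q)=:\ga$, so $\Card(\ga)\le n$, and fix any coupling $\gn$ of $P$ and $Q$ on $X\times X$. Since $Q$ is concentrated on $\ga$, $\gn$ is concentrated on $X\times\ga$, and for $\gn$-a.e.\ $(x,y)$ we have $y\in\ga$, hence $d(x,\ga)\le|x-y|$. Therefore
\[
V_{n,r}(P)\le\int d(x,\ga)^r\,dP(x)=\int d(x,\ga)^r\,d\gn(x,y)\le\int|x-y|^r\,d\gn(x,y).
\]
Taking the infimum over all couplings $\gn$ yields $V_{n,r}(P)\le\rho_r^r(P,Q)$, and then the infimum over $Q\in\C P_n$ gives the claim. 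Combining Steps 1 and 2 completes the proof.

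The only genuinely delicate point is the measurability of the nearest-point map $T$ in Step~1: the Voronoi cells $A_a$ must be chosen as Borel sets forming a partition of $X$ (e.g.\ by breaking ties along a fixed linear order on $\ga$), so that $Q=P\circ T^{-1}$ is a well-defined Borel measure and $(\mathrm{id}\times T)_*P$ is a legitimate coupling. Once the partition is set up correctly, both inequalities are immediate from the definitions, so this is really a bookkeeping lemma rather than a substantive one — indeed it is quoted from \cite[Lemma~3.4]{GL1}, and I would simply cite that source while recording the short argument above for completeness.
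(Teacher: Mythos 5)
Your proof is correct; the paper itself offers no argument for this lemma, simply quoting it as \cite[Lemma~3.4]{GL1}, and your two-inequality argument (pushing $P$ forward under a measurably tie-broken nearest-point map to get an admissible $Q$ and coupling in one direction, and using $d(x,\ga)\le |x-y|$ for any coupling concentrated on $X\times\supp(Q)$ in the other) is essentially the standard Graf--Luschgy proof being cited. The only point worth noting is that you tacitly take the metric $d$ in the definition of $V_{n,r}$ to be the one induced by the norm appearing in $\rho_r$, which is indeed the intended reading of the paper.
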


Let us now prove the following lemma.
\begin{lemma} \label{lemma43}
Let $0<r<+\infty$, and $m_M \to m_F$ with respect to the weak topology. Then, for all $n\geq 1$,
\[
	\lim_{M\to \infty} V_{n, r}(m_M)=V_{n, r}(m_F).
\]

\end{lemma}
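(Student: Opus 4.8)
The plan is to exploit the fact, recorded just above the statement, that the $L_r$-minimal metric $\rho_r$ metrizes the weak topology on $\mathcal M$, together with the variational description $V_{n,r}(P)=\inf_{Q\in\mathcal P_n}\rho_r^r(P,Q)$ from Lemma~\ref{lemma345}. So the function $P\mapsto V_{n,r}(P)$ is, up to the $r$-th power, the distance from $P$ to the set $\mathcal P_n$ in the metric space $(\mathcal M,\rho_r)$, and the claim is simply continuity of a distance-to-a-set function along the convergent sequence $m_M\to m_F$.

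First I would fix $n\ge 1$ and $\varepsilon>0$. For the inequality $\limsup_{M\to\infty}V_{n,r}(m_M)\le V_{n,r}(m_F)$, choose an $n$-optimal set (equivalently, a $Q\in\mathcal P_n$ nearly realizing the infimum) for $m_F$, say $Q^\ast$ with $\rho_r^r(m_F,Q^\ast)\le V_{n,r}(m_F)+\varepsilon$; such a $Q^\ast$ exists since $\int|x|^r\,dm_F<\infty$ on the compact $X$. Then by the triangle inequality for $\rho_r$,
\[
V_{n,r}(m_M)\le\rho_r^r(m_M,Q^\ast)\le\bigl(\rho_r(m_M,m_F)+\rho_r(m_F,Q^\ast)\bigr)^r,
\]
and since $\rho_r(m_M,m_F)\to 0$ by weak convergence, the right-hand side tends to $\rho_r^r(m_F,Q^\ast)\le V_{n,r}(m_F)+\varepsilon$. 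For the reverse inequality $\liminf_{M\to\infty}V_{n,r}(m_M)\ge V_{n,r}(m_F)$, pick for each $M$ a $Q_M\in\mathcal P_n$ with $\rho_r^r(m_M,Q_M)\le V_{n,r}(m_M)+\varepsilon$, and estimate
\[
V_{n,r}(m_F)\le\rho_r^r(m_F,Q_M)\le\bigl(\rho_r(m_F,m_M)+\rho_r(m_M,Q_M)\bigr)^r;
\]
since $\rho_r(m_F,m_M)\to 0$, taking $\liminf$ gives $V_{n,r}(m_F)\le\liminf_{M\to\infty}V_{n,r}(m_M)+\varepsilon$. Letting $\varepsilon\to 0$ in both estimates yields the limit.

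The one point requiring a little care — and the main (mild) obstacle — is controlling the minimizers $Q_M$ uniformly: a priori the supports of the $Q_M$ could drift, and one wants to be sure the quantities $\rho_r^r(m_M,Q_M)$ stay bounded so the triangle-inequality manipulation is legitimate. This is handled by compactness of $X$: for any $P\in\mathcal M$ supported on $X$ one has $V_{n,r}(P)\le\int|x-x_0|^r\,dP(x)\le(\mathrm{diam}\,X)^r$ for a fixed point $x_0\in X$, so all the $V_{n,r}(m_M)$ are bounded by $(\mathrm{diam}\,X)^r$ and hence so are the $\rho_r^r(m_M,Q_M)$; no drift issue actually arises because we never need to extract a limiting $Q$. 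An alternative, slightly cleaner route is to note that $P\mapsto\mathrm{dist}_{\rho_r}(P,\mathcal P_n)$ is $1$-Lipschitz with respect to $\rho_r$ (being a distance-to-a-set function), so $|\rho_r(m_M,\mathcal P_n)-\rho_r(m_F,\mathcal P_n)|\le\rho_r(m_M,m_F)\to 0$, and then raise to the $r$-th power; since $r$ can be taken $\ge 1$ or $<1$ one uses continuity of $t\mapsto t^r$ on the bounded interval $[0,(\mathrm{diam}\,X)^r]$ to conclude $V_{n,r}(m_M)=\rho_r^r(m_M,\mathcal P_n)\to\rho_r^r(m_F,\mathcal P_n)=V_{n,r}(m_F)$. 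I would present the Lipschitz version, as it makes the continuity transparent and avoids explicitly juggling near-minimizers.
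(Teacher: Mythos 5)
Your proof is correct and, in the Lipschitz form you say you would present, is essentially the paper's own argument: the paper proves the lemma by the single estimate $\absval{V_{n,r}^{1/r}(m_M)-V_{n,r}^{1/r}(m_F)}\le \rho_r(m_M,m_F)$, obtained from Lemma~\ref{lemma345} exactly as in your distance-to-$\mathcal{P}_n$ observation, combined with the fact (noted just before the lemma) that weak convergence and $\rho_r$-convergence coincide on $\mathcal{M}$ and that compactness of $X$ gives the needed moment bound. Your $\varepsilon$-near-minimizer version is just an unwound form of the same estimate, so there is no genuinely different route here.
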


\begin{proof}
Since $X$ is compact, for any Borel probability measure $\gn$ on $X$ we have $\int|x|^r\,d\gn(x)<\infty$. Hence, by Lemma~\ref{lemma345}, for any $n\geq 1$, it follows that
\begin{align*}
	\absval{V_{n, r}^{1/r}(m_M)-V_{n,r}^{1/r}(m_F)}\leq \rho_r(m_M, m_F),
\end{align*}
which yields the lemma.
\end{proof}
The following lemma is useful.

\begin{lemma}\label{lemma44}  For any two Borel probability measures $\mu$ and $\gn$ and for all sufficiently large $n$, if $V_{n, r}(\mu)\leq V_{n, r}(\gn)<1$, then 
	\[\ul D_r(\mu) \leq \ul D_r(\gn) \te{ and } V_{n, r}^{\frac{\ul D_r(\mu)}{r}} (\mu) \leq V_{n, r}^{\frac{\ul D_r(\gn)}{r}}(\gn),\]
where $\ul D_r(\mu)$ and $\ul D_r(\gn)$ are the lower quantization dimensions of order $r$ of $\mu$ and $\gn$, respectively.
\end{lemma}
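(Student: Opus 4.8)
The statement comprises two assertions, and the plan is to prove them in turn. The first, $\ul D_r(\mu)\le\ul D_r(\gn)$, is immediate from the definition of the lower quantization dimension. For all sufficiently large $n$ the hypothesis $V_{n,r}(\mu)\le V_{n,r}(\gn)<1$ makes both $-\log V_{n,r}(\mu)$ and $-\log V_{n,r}(\gn)$ strictly positive and gives $-\log V_{n,r}(\gn)\le-\log V_{n,r}(\mu)$; dividing the fixed positive quantity $r\log n$ by these two numbers reverses the inequality, so that
\[
\frac{r\log n}{-\log V_{n,r}(\mu)}\ \le\ \frac{r\log n}{-\log V_{n,r}(\gn)}
\]
for all such $n$, and passing to $\liminf_{n\to\infty}$ gives $\ul D_r(\mu)\le\ul D_r(\gn)$. (If $V_{n,r}(\mu)=0$ the left-hand side is $0$ and there is nothing to prove.) This step uses nothing about $m_F$ or the iterated function system.

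For the second assertion put $s:=\ul D_r(\mu)$ and $t:=\ul D_r(\gn)$, so that $0\le s\le t$ by the first part; if $t=0$ then $s=0$ and both sides of the inequality equal $1$, so assume $t>0$. Since the bases $V_{n,r}(\mu),V_{n,r}(\gn)$ lie in $(0,1)$, applying $-\log(\cdot)$ turns the desired inequality $V_{n,r}(\mu)^{s/r}\le V_{n,r}(\gn)^{t/r}$ into the equivalent form
\[
s\bigl(-\log V_{n,r}(\mu)\bigr)\ \ge\ t\bigl(-\log V_{n,r}(\gn)\bigr)\qquad\text{for all large }n .
\]
I would first record the two crude consequences of the hypothesis and the first part: raising $V_{n,r}(\mu)\le V_{n,r}(\gn)$ to the power $s/r$ gives $V_{n,r}(\mu)^{s/r}\le V_{n,r}(\gn)^{s/r}$, and since $s\le t$ with $V_{n,r}(\gn)<1$ we also have $V_{n,r}(\gn)^{t/r}\le V_{n,r}(\gn)^{s/r}$. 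To close the argument one then has to control, for the indices $n$ in question, how far the ``slopes'' $r\log n/(-\log V_{n,r}(\cdot))$ lie above their lower limits $s$ and $t$; I would do this by using that $n\mapsto V_{n,r}$ is non-increasing (so $-\log V_{n,r}$ is non-decreasing and these slopes cannot rise quickly with $n$) and by examining $n$ along a subsequence realizing $s=\liminf_n r\log n/(-\log V_{n,r}(\mu))$, then exponentiating back.

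The step I expect to be the genuine obstacle is exactly this last one. The two crude estimates just recorded show only that $V_{n,r}(\mu)^{s/r}$ and $V_{n,r}(\gn)^{t/r}$ both lie below $V_{n,r}(\gn)^{s/r}$, which does not order them against one another, so the dimension inequality of the first part cannot by itself force the second one; one must genuinely exploit the rate at which $V_{n,r}$ decays in $n$, and some further input appears to be needed. For instance, if $V_{n,r}(\mu)/V_{n,r}(\gn)$ is bounded away from $0$ then the two slopes are asymptotically equal, $\ul D_r(\mu)=\ul D_r(\gn)$, and the second inequality collapses to the trivial case $s=t$; I would therefore expect the proof to extract comparability of this sort from the context --- the weak convergence $m_M\to m_F$ and the pointwise convergence $V_{n,r}(m_M)\to V_{n,r}(m_F)$ of Lemma~\ref{lemma43}. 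Once this is in hand the two conclusions of the lemma follow at once, and together with Lemmas~\ref{lemma104} and \ref{lemma42} they are what Section~4 uses to carry Lindsay--Mauldin's identity $D_r(m_M)=\gk_{r,M}$ over to the infinite system and pass to the limit $M\to\infty$.
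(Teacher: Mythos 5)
Your treatment of the first assertion is correct and is word for word the paper's own argument: from $V_{n,r}(\mu)\le V_{n,r}(\gn)<1$ one gets $0<-\log V_{n,r}(\gn)\le -\log V_{n,r}(\mu)$, hence $\frac{r\log n}{-\log V_{n,r}(\mu)}\le\frac{r\log n}{-\log V_{n,r}(\gn)}$, and taking $\liminf$ gives $\ul D_r(\mu)\le\ul D_r(\gn)$. The gap is that you never prove the second assertion $V_{n,r}^{\ul D_r(\mu)/r}(\mu)\le V_{n,r}^{\ul D_r(\gn)/r}(\gn)$: you reduce it to $\ul D_r(\mu)\bigl(-\log V_{n,r}(\mu)\bigr)\ge \ul D_r(\gn)\bigl(-\log V_{n,r}(\gn)\bigr)$, observe correctly that the crude consequences of the hypotheses do not order these two quantities, and then stop, deferring to ``further input'' (comparability of $V_{n,r}(\mu)/V_{n,r}(\gn)$, weak convergence of $m_M$ to $m_F$) that you do not supply. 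As a proof of the stated lemma the proposal is therefore incomplete.

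That said, your diagnosis of the obstacle is sound, and it is exactly the point where the paper's own proof is defective: the paper simply multiplies the two inequalities, asserting that $\ul D_r(\mu)\le\ul D_r(\gn)$ and $\log V_{n,r}(\mu)\le\log V_{n,r}(\gn)<0$ yield $\frac{\ul D_r(\mu)}{r}\log V_{n,r}(\mu)\le\frac{\ul D_r(\gn)}{r}\log V_{n,r}(\gn)$. This is not a valid inference for numbers of these signs; for instance $2\le 3$ and $-1\le -0.9<0$, yet $2\cdot(-1)=-2\not\le -2.7=3\cdot(-0.9)$. So the ``missing input'' you were hoping to find is not present in the paper either; the second inequality is not actually established there, and it does not follow from the stated hypotheses by any such manipulation (both sides are merely bounded above by $\frac{\ul D_r(\mu)}{r}\log V_{n,r}(\gn)$, as you noted in exponential form). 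The saving grace is that only the first assertion of Lemma~\ref{lemma44} is used in the sequel — in Proposition~\ref{prop33} to get $\ul D_r(m_F)\ge\ul D_r(m_M)=\gk_{r,M}$ before letting $M\to\infty$ via Lemma~\ref{lemma42} — and that part you proved completely.
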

\begin{proof}
Let $V_{n, r}(\mu)\leq V_{n, r}(\gn)<1$ for all sufficiently large $n$. Then, it follows that $\log V_{n, r}(\mu)\leq \log  V_{n, r}(\gn)<0$, which implies
\[\frac {1} {-\log V_{n, r}(\mu)} \leq \frac {1}{-\log V_{n, r}(\gn)}, \te{ and so }
\frac {r\log n} {-\log V_{n, r}(\mu)} \leq \frac {r\log n}{-\log V_{n, r}(\gn)}.\]
Now, taking liminf on both sides, we have $\ul D_r(\mu) \leq \ul D_r(\gn)$. Note that
\[\log V_{n, r}^{\frac{\ul D_r(\mu)}{r}} (\mu)=\frac{\ul D_r(\mu)}{r} \log  V_{n, r}(\mu)\leq \frac{\ul D_r(\gn)}{r} \log  V_{n, r}(\gn)=\log V_{n, r}^{\frac{\ul D_r(\gn)}{r}} (\gn),\]
and so $V_{n, r}^{\frac{\ul D_r(\mu)}{r}} (\mu) \leq V_{n, r}^{\frac{\ul D_r(\gn)}{r}}(\gn)$. Thus, the lemma is yielded.
\end{proof}
\begin{remark}
Similarly, under the above condition in Lemma~\ref{lemma44},  if $\ol D_r(\mu)$ and $\ol D_r(\gn)$ are the upper quantization dimensions of order $r$ of $\mu$ and $\gn$, respectively, then it can be proved that
\[\ol D_r(\mu) \leq \ol D_r(\gn) \te{ and } V_{n, r}^{\frac{\ol D_r(\mu)}{r}} (\mu) \leq V_{n, r}^{\frac{\ol D_r(\gn)}{r}}(\gn).\]

\end{remark}
We conclude this section with the following proposition which gives the desired upper bound for the quantization dimension.
\begin{prop} \label{prop33} Let $0<r<+\infty$ be fixed and $\gk_r$ be as in Lemma~\ref{lemma101}. Then,
	\begin{align*}
	\ul D_r(m_F)\geq\gk_r.
	\end{align*}
\end{prop}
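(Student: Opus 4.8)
The plan is to deduce the infinite case from Lindsay and Mauldin's theorem for finite systems. Recall from \cite{LM} that for every $M\geq 2$ the quantization dimension of $m_M$ exists and equals $\kappa_{r,M}$, so in particular $\underline{D}_r(m_M)=\kappa_{r,M}$, while Lemma~\ref{lemma42} gives $\kappa_{r,M}\to\kappa_r$. Hence it suffices to show $\underline{D}_r(m_M)\leq\underline{D}_r(m_F)$ for each fixed $M$ and then let $M\to\infty$. By Lemma~\ref{lemma44} this reduces to the single inequality
\[
V_{n,r}(m_M)\leq V_{n,r}(m_F)<1\qquad\text{for all sufficiently large }n,
\]
where $V_{n,r}(m_F)<1$ eventually is automatic: a finite $\varepsilon$-net of the compact set $X$ gives $V_{n,r}(m_F)\leq\varepsilon^r$ for $n$ large, so $V_{n,r}(m_F)\to 0$.

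So the whole difficulty lies in the comparison $V_{n,r}(m_M)\leq V_{n,r}(m_F)$ for large $n$. This is \emph{not} a restriction argument: in general $m_M$ is singular with respect to $m_F$ (indeed $m_F(J_M)=\lim_k\sum_{\omega\in I_M^k}m_F(J_\omega)=0$ whenever the truncated pressure $P(F_M)$ is negative). I would instead combine Lemma~\ref{lemma43}, which already furnishes $V_{n,r}(m_M)\to V_{n,r}(m_F)$ as $M\to\infty$ for each fixed $n$, with monotonicity of the sequence $M\mapsto V_{n,r}(m_M)$; the two together force $V_{n,r}(m_M)\leq V_{n,r}(m_F)$ for every $M$. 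To get monotonicity, i.e. $V_{n,r}(m_M)\leq V_{n,r}(m_{M'})$ for $M<M'$, I would construct a $1$-Lipschitz surjection $T:J_{M'}\to J_M$ with $T_{*}m_{M'}=m_M$; then $V_{n,r}(m_M)=V_{n,r}(T_{*}m_{M'})\leq V_{n,r}(m_{M'})$, the inequality because the image under $T$ of an optimal $n$-set for $m_{M'}$ is an $n$-set whose $m_M$-error is no larger. Such a $T$ is built from the coding maps by collapsing, in a word $\tau\in I_{M'}^{\infty}$, every letter $i\in\{M+1,\dots,M'\}$ into a fixed finite block over $I_M$ of comparable contraction ratio; the point at which the bounded distortion property (Lemma~\ref{lemma5566}) and the strong open set condition are genuinely needed is precisely in carrying this out with a uniform Lipschitz constant while transporting $m_{M'}$ exactly onto $m_M$.

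This geometric comparison of the conformal measures of nested subsystems is the step I expect to be the main obstacle: unlike the finite-alphabet setting of \cite{LM}, one cannot pass between a system and a subsystem by conditioning, and the estimate must exploit the conformal structure quantitatively rather than rely on symbolic combinatorics. Once it is in place the proof closes at once: Lemma~\ref{lemma44} gives $\kappa_{r,M}=\underline{D}_r(m_M)\leq\underline{D}_r(m_F)$ for every $M\geq 2$, and Lemma~\ref{lemma42} yields $\kappa_r\leq\underline{D}_r(m_F)$. (A self-contained alternative avoiding $m_M$ altogether would be to run the Graf--Luschgy recursive lower bound directly on $m_F$: discarding all branches past the $M$-th and using $\int_{\varphi_i(J)}d(x,\gamma)^r\,dm_F=\int_J d(\varphi_i(y),\gamma)^r e^{f^{(i)}(y)}\,dm_F(y)$ with Lemma~\ref{lemma5566} gives a recursion $V_{n,r}(m_F)\gtrsim\sum_{i\leq M}\|e^{f^{(i)}}\|\,\|\varphi_i'\|^r\,V_{n_i,r}(m_F)$ over allocations $\sum_i n_i\lesssim n$, which on optimization yields $V_{n,r}(m_F)\geq c_M n^{-r/s_M}$ with $s_M\uparrow\kappa_r$; the strong open set condition enters in localizing the points of $\gamma$ to the branches.)
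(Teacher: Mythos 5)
Your overall skeleton is the same as the paper's: invoke Lindsay--Mauldin to get $\underline{D}_r(m_M)=\kappa_{r,M}$ for the truncated systems, convert a comparison of quantization errors into $\underline{D}_r(m_M)\le \underline{D}_r(m_F)$ via Lemma~\ref{lemma44}, and let $M\to\infty$ using Lemma~\ref{lemma42}. The paper disposes of the needed comparison $V_{n,r}(m_M)\le V_{n,r}(m_F)<1$ in one line, from the inclusion $\supp(m_M)\subseteq\supp(m_F)$ together with $V_{n,r}(m_F)\to 0$; you rightly note that no restriction or absolute-continuity argument is available here (indeed typically $m_F(J_M)=0$), but the substitute you propose is exactly where your proof has a genuine gap.

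Your route to $V_{n,r}(m_M)\le V_{n,r}(m_F)$ is monotonicity of $M\mapsto V_{n,r}(m_M)$ combined with Lemma~\ref{lemma43}, and the monotonicity is made to rest on a $1$-Lipschitz surjection $T:J_{M'}\to J_M$ with $T_*m_{M'}=m_M$. This map is never constructed, and the sketch (collapse each letter $i\in\{M+1,\dots,M'\}$ to a fixed block over $I_M$ of comparable contraction ratio) cannot deliver what is claimed. Exact transport would force $m_{M'}\bigl(T^{-1}(J_{M,\omega})\bigr)=m_M(J_{M,\omega})$ for every $\omega\in I_M^{*}$; the left-hand side is a sum of $m_{M'}$-masses of cylinders of the larger system, governed by $\exp(S_\tau(F))$ over the recoded words $\tau$, while the right-hand side is comparable to $e^{-|\omega|P(F_M)}\|\exp(S_\omega(F))\|$, with a normalization $P(F_M)$ that varies with $M$; conformal measures of nested subsystems are simply not push-forwards of one another under such symbolic collapsings, and bounded distortion yields only comparability up to multiplicative constants, never equality of measures. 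Similarly, Lipschitz constant exactly $1$ is far more than distortion estimates give (a constant $C$ would still suffice for the dimension inequality, but even uniform Lipschitz continuity of the recoding is not argued). Since neither the monotonicity nor the transport map is established, the central inequality remains unproven and your argument does not close; the parenthetical Graf--Luschgy-style recursion on $m_F$ is likewise only a sketch. For comparison, the paper's own justification is the bare support-inclusion assertion quoted above, so your proposal identifies the right pressure point but replaces the paper's one-line claim with a construction that is left unexecuted and is doubtful as stated.
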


\begin{proof}
	Let $m_M$ be the $F_M$ conformal measure for the truncated system as defined in Remark~\ref{rem889}. Then, we know $\gk_{r, M}$, given by Lemma~\ref{lemma102}, is the quantization dimension of the measure $m_M$. Since $\supp(m_M) \sci \supp(m_F)$, for all sufficiently large $n$, we have $V_{n, r}(m_M)\leq V_{n,r}(m_F)<1$, and then Lemma~\ref{lemma44} implies $\ul D_r(m_F)\geq \ul D_r(m_M)=\gk_{r, M}$. Now, take $M\to \infty$ and use Lemma~\ref{lemma44}, which yields $\ul D_r(m_F) \geq \gk_r$.
\end{proof}

\section{Proof of Theorem~\ref{theorem}}
In this section we prove our main theorem, Theorem~\ref{theorem}.

\begin{proof}[Proof of Theorem \ref{theorem}]

\noindent Let $\Phi=\set{\vp_i:X\to X}_{i\in I}$ be an infinite conformal iterated function system satisfying the strong open set condition and let $F=\set{f^{(i)} : X\to \D R}_{i\in I}$ be an infinite family of strongly H\"{o}lder functions. Let $\kp_r$ be the unique number given by Lemma~\ref{lemma101} with
\begin{align*}
	P\left(\frac{\kp_r}{r+\kp_r},\frac{r\kp_r}{r+\kp_r}\right)=0.
\end{align*}
Since $0<r<+\infty$ is fixed, write $q=\frac{\kp_r}{r+\kp_r}$. Then, by \cite{HMU}, there exists a unique Borel probability measure $m_q$ ($G_{q, \gb(q)}$-conformal measure) on $J$ such that for any continuous function $g : X \to \D R$ and $n\geq 1$, we have
\[
	\int g \,dm_q=\sum_{\go\in I^n} \int \Big(\exp(S_\go(F))|\vp_\go'|^r\Big)^q \cdot (g\circ \vp_\go) \,d m_q(x),
\]
in particular, for any Borel $A \sci J$, and $\go \in I^\ast$, we have
\begin{equation} \label{eq3451} m_q(\vp_\go(A))=\int_A \Big(\exp(S_\go(F))|\vp_\go'|^r\Big)^q \,dm_q(x).
\end{equation}
For $N\geq 2$, consider the family $F_N\sub F$ given by $
	F_N=\set{f^{(i)} : X\to \D R}_{i=1}^N
$
and the finite subsystem of $\Phi$ consisting of the first $N$ elements, that is,
\begin{align*}
	\Phi_N=\set{\tilde \vp_i:X\to X}_{i=1}^N,
\end{align*}
where the collection of maps $\set{\tilde\vp_1,\dots,\tilde\vp_N}$ is a permutation of the maps $\set{\vp_1,\dots,\vp_N}$ so that $\norm{\tilde{\vp}_i'}$ is decreasing for the relabeled maps, i.e.,
\begin{align*}
	\norm{\tilde{\vp}_i'}\geq\norm{\tilde{\vp}_{i+1}'}
\end{align*}
for $1\leq i<N$. Let $J_N$ be its associated limit set,  and let
$
	I_N=\set{1,\dots, N}.
$
Further, let $G_{N,q,t}$ be the subset of $G_{q,t}$ defined based on the subfamily $F_N$ of $F$, i.e.,
\begin{align*}
		G_{N,q,t}=\set{g^{(i)}(q, t):=q f^{(i)} + t\log |\tilde{\vp}_i'|}_{i \in I_N}.
\end{align*}
We call $\Gm \sub I^\ast$ a \tit{finite maximal antichain} if $\Gm$ is a finite set of words in $I^\ast$, such that every sequence in $I^\infty$ is an extension of some word in $\Gm$, but no word in $\Gm$ is an extension of another word in $\Gm$. In particular, we have that
\begin{align*}
	\tilde\vp_\om(J_N)\cap\tilde\vp_\tau(J_N)=\emptyset
\end{align*}
for all $\om\neq \tau \in \Gm\sub I_N^*$.
If $\om=\om_1\om_2\cdots\om_n \in I_N^n$, $n\geq 1$, set
\begin{align*}
	S_\go(F_N):=\sum_{j=1}^n f^{(\go_j)}\circ \tilde \vp_{\gs^j(\go)},	
\end{align*}
and we denote
\[
	\tilde \vp_\go:=\tilde \vp_{\go_1}\circ\cdots \circ \tilde \vp_{\go_n}, \te{ and } J_{N,\go}:=\tilde \vp_\go(J_N).
\]
Then, there exists a unique Borel probability measure $m_N$ (unique $F_N$-conformal measure) supported on $J_N$ such that for any continuous function
  $g : X\to \D R$ and $n\geq 1$,
\begin{equation*}
	\int g\,dm_N=\sum_{\go\in I_{N}^n}\int \exp(S_\om(F_N))\cdot (g\circ \tilde \vp_\go)\,dm_N.
\end{equation*}
Let $P_N(q, t)$ be the topological pressure as in \eqref{j eq sub pressure def} and let $\bt_N(q)$ be the temperature function in this case, i.e., for each $q\in \D R$ there exists a unique $\bt_N(q) \in \D R$ such that $P_N(q, \bt_N(q))=0$. Then, for each $q\in \D R$, there exists a probability measure $m_{N,q}$ ($G_{N,q,t}$-conformal measure) on $J_N$ such that for any continuous function $g: X \to \D R$ and $n\geq 1$,
\[
	\int g \,dm_{N,q}=\sum_{\go \in I_{N}^n} \int (\exp(S_\om(F_N))^q |\tilde \vp_\go'|^{\bt_N(q)}\cdot (g\circ \tilde \vp_\go)\, dm_{N,q}.
\]
In fact, for any Borel $A\sci J_N$ and $\go \in I_{N}^\ast$, we have
\begin{equation*}
	\label{eq34511} m_{N,q}(\tilde \vp_\go(A))=\int_A (\exp(S_\om(F_N))^q |\tilde \vp_\go'|^{\bt_N(q)} \,dm_{N,q}.
\end{equation*}
Again, for any $\go \in I_{N}^\ast$, using Bounded Distortion Property and Lemma~\ref{lemma92}, we have
\begin{align*}
	1\geq m_{N,q}(\tilde \vp_\go(J_N))& =\int_{J_{N,\om}} (\exp(S_\om(F_N))^q |\tilde \vp_\go'|^{\bt_N(q)} \,dm_{N,q}\\
	&\geq C^{-q}K^{-\bt_N(q)} \|\exp(S_\om(F_N))\|^q \|\tilde \vp_\go'\|^{\bt_N(q)},
\end{align*}
which for each $q\in \D R$ implies
\begin{align}\label{j eq 1}
	\|\exp(S_\om(F_N))\|^q \|\tilde \vp_\go'\|^{\bt_N(q)}\leq C^qK^{\bt_N(q)} m_{N,q}(\tilde \vp_\go(J_N)).
\end{align}
Then, for any finite maximal antichain $\gG\sci I_{N}^*$, it follows that
\begin{align*}
	\sum_{\om\in\Gm} m_{N,q}(J_{N,\om})\leq 1,
\end{align*}
which together with \eqref{j eq 1} gives
\begin{equation}
	\label{eq890}  \sum_{\go \in \gG} \|\exp(S_\om(F_N))\|^q\|\tilde \vp_\go'\|^{\bt_N(q)}\leq  C^qK^{\gb_N(q)}.
\end{equation}
As in Lemma~\ref{lemma101}, there exists exactly one number
$\kp_{r,N} \in (0, +\infty)$ such that
\begin{equation}
	\label{eq3333} P_N\Big(\frac{\kp_{r,N}}{r+\kp_{r,N}}, \frac{r\kp_{r,N}}{r+\kp_{r,N}}\Big)=0.
\end{equation}
Thus, if we take $q_N=\frac{\kp_{r,N}}{r+\kp_{r,N}}$, then $\bt_N(q_N)=rq_N$. Then, \eqref{eq890} implies
\begin{equation}
	\label{eq345} \sum_{\go \in \gG} \Big(\|\exp(S_\om(F_N))\|\|\tilde \vp_\go'\|^r\Big)^{\frac{\kp_{r,N}}{r+\kp_{r,N}}}\leq  (CK^r)^\frac{\kp_{r,N}}{r+\kp_{r,N}}.
\end{equation}
Now, since  $P_N(q,t)\leq P(q,t)$, and since the function
$
	x\longmapsto\frac{x}{x+r}
$
is increasing for $0<r<\infty$,  we have
\begin{align}\label{j eq 2}
	\frac{\kp_{r,N}}{\kp_{r,N}+r}\leq \frac{\kp_r}{\kp_r+r}.
\end{align}
In light of \eqref{eq345} and \eqref{j eq 2}, we can write
\begin{align}
	&\sum_{\go \in \gG} \Big(\|\exp(S_\om(F_N))\|\|\tilde \vp_\go'\|^r\Big)^{\frac{\gk_r}{r+\gk_r}}\nonumber\\
	&\quad= \sum_{\go \in \gG} \Big(\|\exp(S_\om(F_N))\|\|\tilde \vp_\go'\|^r\Big)^{\frac{\kp_{r,N}}{r+\kp_{r,N}}} \Big(\|\exp(S_\om(F_N))\|\|\tilde \vp_\go'\|^r\Big)^{\frac{\gk_r}{r+\gk_r}-\frac{\kp_{r,N}}{r+\kp_{r,N}}}\nonumber\\
	&\quad\leq (CK^r)^{\frac{\kp_{r,N}}{r+\kp_{r,N}}} (CK^r)^{\frac{\gk_r}{r+\gk_r}-\frac{\kp_{r,N}}{r+\kp_{r,N}}}=(CK^r)^{\frac{\gk_r}{r+\gk_r}}.	 \label{eq3451}
\end{align}

We now prove the following claim.
\begin{claim} \label{prop32} Let $0<r<+\infty$ be fixed, and let $\gk_r$ be as in Lemma~\ref{lemma101}. Then, we have
	\begin{align*}
		\limsup_{n\to\infty} n V_{n,r}^{\gk_r/r}(m_F) <+\infty.
	\end{align*}
\end{claim}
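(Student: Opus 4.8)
The plan is to produce a concrete sequence of $n$-point sets $\ga_n$ (for a suitable subsequence of $n$) whose quantization error of order $r$ decays like $n^{-r/\gk_r}$, using the geometry of the finite truncated system $\Phi_N$ for $N$ large but fixed, together with the summability estimate $(\ref{eq3451})$. The standard Graf--Luschgy / Lindsay--Mauldin strategy is as follows. Fix $N$ large. For a real parameter $\ep>0$ small, form the \emph{Moran-type antichain}
\[
\gG_\ep = \setm{\go\in I_N^\ast}{\|\exp(S_\go(F_N))\|\|\tilde\vp_\go'\|^r \le \ep < \|\exp(S_{\go^-}(F_N))\|\|\tilde\vp_{(\go^-)}'\|^r},
\]
which is a finite maximal antichain in $I_N^\ast$ by uniform contractivity of $\Phi$ and Lemma~\ref{lemma921} (which controls products of the $\exp(S_\go(F))$ factors from below so that the defining quantity genuinely shrinks along every branch). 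Put $n = n_\ep := \Card(\gG_\ep)$; as $\ep\downarrow 0$ the integers $n_\ep$ run through an infinite set tending to $\infty$, and by bounded distortion consecutive values of $n_\ep$ are comparable, so proving $\sup_\ep n_\ep V_{n_\ep,r}(m_F)<\infty$ along this subsequence suffices to bound $\limsup_n n V_{n,r}(m_F)$ up to a multiplicative constant. For the quantizing set, pick one point $x_\go \in J_\go = \vp_\go(J)$ for each $\go\in\gG_\ep$ and set $\ga_\ep := \setm{x_\go}{\go\in\gG_\ep}$, so $\Card(\ga_\ep)=n_\ep$.

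Next I would estimate $V_{n_\ep,r}(m_F)\le \int d(x,\ga_\ep)^r\,dm_F(x)$. Since the $J_\go$, $\go\in\gG_\ep$, cover $J$ (every infinite word is an extension of some word in $\gG_\ep$) and $m_F(J)=1$, decompose the integral over the pieces $\vp_\go(X)$; on $\vp_\go(X)$ the distance to $\ga_\ep$ is at most $\te{diam}(\vp_\go(X)) \le \tilde K\|\vp_\go'\|\te{diam}(X)$ by Lemma~\ref{lemma5566}. Hence
\[
\int d(x,\ga_\ep)^r\,dm_F \;\le\; \sum_{\go\in\gG_\ep} (\tilde K\,\te{diam}(X))^r\|\vp_\go'\|^r\, m_F(\vp_\go(X)).
\]
Now I want to replace $\|\vp_\go'\|^r m_F(\vp_\go(X))$ by $\big(\|\exp(S_\go(F))\|\|\vp_\go'\|^r\big)^{q_r}$ where $q_r = \gk_r/(r+\gk_r)$, at the cost of a constant: this uses the conformality identity $m_F(\vp_\go(X)) \asymp \|\exp(S_\go(F))\|$ (from the integral formula $m_F(\vp_\go(A))=\int_A\exp(S_\go(F))\,dm_F$ together with Lemma~\ref{lemma92} and $m_F(X)=1$), combined with the \emph{selection inequality} from $\gG_\ep$: on $\gG_\ep$ one has $\|\exp(S_\go(F))\|\|\vp_\go'\|^r \asymp \ep$ (upper bound by definition; lower bound from the parent inequality plus bounded distortion and Lemma~\ref{lemma92}). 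So each summand is comparable to $\ep^{1-q_r}\cdot\big(\|\exp(S_\go(F))\|\|\vp_\go'\|^r\big)^{q_r}$, whence, using $(\ref{eq3451})$ applied to $\gG = \gG_\ep$ (with $N$ fixed, and noting $\|\exp(S_\go(F_N))\| = \|\exp(S_\go(F))\|$ and $\|\tilde\vp_\go'\|$ just a relabelling of $\|\vp_\go'\|$),
\[
V_{n_\ep,r}(m_F)\;\le\; \text{const}\cdot \ep^{1-q_r}\sum_{\go\in\gG_\ep}\big(\|\exp(S_\go(F))\|\|\vp_\go'\|^r\big)^{q_r}\;\le\;\text{const}\cdot \ep^{1-q_r}.
\]
Finally I bound $n_\ep$ from below in terms of $\ep$: again by the two-sided estimate $\|\exp(S_\go(F))\|\|\vp_\go'\|^r\asymp\ep$ on $\gG_\ep$ and the conformality of $m_F$, $1 = m_F(J)\le \sum_{\go\in\gG_\ep}m_F(\vp_\go(X))\asymp\sum_{\go\in\gG_\ep}\|\exp(S_\go(F))\|$, while $\|\exp(S_\go(F))\| = \big(\|\exp(S_\go(F))\|\|\vp_\go'\|^r\big)\big/\|\vp_\go'\|^r$; one has to be a little careful here, so the cleanest route is: on $\gG_\ep$, $\big(\|\exp(S_\go(F))\|\|\vp_\go'\|^r\big)^{q_r}\asymp\ep^{q_r}$, and by $(\ref{eq3451})$ the sum of these over $\gG_\ep$ is bounded \emph{above}, but for the lower bound on $n_\ep$ I instead use $\sum_{\go\in\gG_\ep}\big(\|\exp(S_\go(F))\|\|\vp_\go'\|^r\big)^{q_r}\ge c$ for a fixed $c>0$ (since these sums over finite maximal antichains are bounded below — this follows from $P_N(q_N,\bt_N(q_N))=0$ and the lower-bound half of $(\ref{j eq 1})$-type estimates, or simply from the fact that a subprobability-like quantity attached to a maximal antichain stays bounded below), giving $n_\ep\cdot\ep^{q_r}\ge \text{const}$, i.e. $n_\ep\ge\text{const}\cdot\ep^{-q_r}$. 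Combining, $n_\ep\, V_{n_\ep,r}(m_F)^{q_r/?}$ — more precisely, raising the $V$-bound to the power $\gk_r/r$ and using $q_r\cdot\gk_r/r \cdot\frac{1}{1-q_r}$... let me just state the bookkeeping: $V_{n_\ep,r}(m_F)\le\text{const}\cdot\ep^{1-q_r}$ and $n_\ep\ge\text{const}\cdot\ep^{-q_r}$ give
\[
n_\ep\, V_{n_\ep,r}(m_F)^{\gk_r/r}\;\le\;\text{const}\cdot \ep^{-q_r}\cdot\ep^{(1-q_r)\gk_r/r},
\]
and the exponent is $-q_r + (1-q_r)\gk_r/r = 0$ precisely because $\gk_r = rq_r/(1-q_r)$. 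Passing from the subsequence $n_\ep$ to all $n$ via comparability of consecutive $n_\ep$ (bounded distortion controls the ratio) finishes the claim.

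The main obstacle is the pair of \emph{uniform} two-sided estimates that make the whole scheme go through: first, that on the Moran antichain $\gG_\ep$ the quantities $\|\exp(S_\go(F))\|\|\vp_\go'\|^r$ are comparable to $\ep$ with constants independent of $\ep$ and of $N$ — the lower bound here is the delicate one and requires combining the defining inequality for the parent $\go^-$, the bounded distortion property, and Lemma~\ref{lemma921} to absorb the last-letter factor $\|\exp(f^{(\go_{|\go|})})\|\|\vp_{\go_{|\go|}}'\|^r$, which is itself bounded below only because there are finitely many letters in $I_N$ (this is exactly where passing to the truncated system $\Phi_N$ is essential, and why the final constant may depend on $N$ — but since $(\ref{eq3451})$ already gave an $N$-independent bound $(CK^r)^{q_r}\le (CK^r)$ on the antichain sum, the dependence on $N$ can be tracked and shown harmless, or one fixes one large $N$ at the outset). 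Second, one must make sure the relabelled maps $\tilde\vp_i$ and the decreasing rearrangement of $\|\tilde\vp_i'\|$ are not actually needed for this particular claim (they are set up for later lemmas handling the lower bound and the $u_{n,r}$ quantities); here only the antichain sum bound $(\ref{eq3451})$ is used, and that is invariant under relabelling. I would therefore open the proof by fixing $N$, recording the conformality estimate $m_F(\vp_\go(X))\asymp\|\exp(S_\go(F))\|$ and the antichain geometry, then run the four-line computation above.
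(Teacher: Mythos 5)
Your overall route is the paper's: truncate to $\Phi_N$, build a Moran-type finite maximal antichain by thresholding $\|\exp(S_\go(F_N))\|\|\tilde\vp_\go'\|^r$ (the paper thresholds the equivalent quantity $m_N(J_{N,\go})\|\tilde\vp_\go'\|^r$), place one point per cylinder, and split each summand into the exponents $q_r$ and $1-q_r$ so that the antichain estimate \eqref{eq3451} absorbs the $q_r$-part. However, two steps do not work as written. The first is the covering claim: $\gG_\ep$ is a maximal antichain of $I_N^*$, so the cylinders $\vp_\go(X)$, $\go\in\gG_\ep$, cover only (a neighborhood of) $J_N$, not $J$; an infinite word over the \emph{full} alphabet need not extend any word of $\gG_\ep$, and $m_F\bigl(J\setminus\bigcup_{\go\in\gG_\ep}\vp_\go(X)\bigr)\geq \sum_{i>N}m_F(\vp_i(X))>0$ is a constant independent of $\ep$. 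Hence your bound on $\int d(x,\ga_\ep)^r\,dm_F$ drops a constant-order term (points outside the union are only at distance $\leq \mathrm{diam}(X)$ from $\ga_\ep$), and the inequality ``$1=m_F(J)\leq\sum_{\go\in\gG_\ep}m_F(\vp_\go(X))$'' fails. The paper's own proof performs the same reduction from $m_F$ to a sum over the truncated antichain at the analogous display, so you are following its line, but the justification you give (``every infinite word is an extension of some word in $\gG_\ep$'') is false once the alphabet is infinite, and the uncovered mass is exactly the point where the infinite-alphabet case differs from Lindsay--Mauldin and needs explicit treatment.

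The second problem is an inverted inequality in the final bookkeeping: to bound $n_\ep V_{n_\ep,r}^{\gk_r/r}(m_F)$ you need an \emph{upper} bound $n_\ep\leq \mathrm{const}_N\,\ep^{-q_r}$, whereas you derive (via the unnecessary ``antichain sums bounded below'' detour) a \emph{lower} bound $n_\ep\geq \mathrm{const}\cdot\ep^{-q_r}$ and then substitute it as if it were an upper bound. The needed upper bound does follow from ingredients you already record: on $\gG_\ep$ each term satisfies $\|\exp(S_\go(F))\|\|\vp_\go'\|^r\geq c_N\,\ep$ (your ``delicate'' lower bound, which uses finiteness of $I_N$), so $n_\ep\,(c_N\ep)^{q_r}\leq\sum_{\go\in\gG_\ep}\bigl(\|\exp(S_\go(F))\|\|\vp_\go'\|^r\bigr)^{q_r}\leq (CK^r)^{q_r}$ by \eqref{eq3451}. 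Once you have this, choose $\ep=\ep(n)$ so that $\mathrm{const}_N\,\ep^{-q_r}\leq n$; then $\Card(\gG_{\ep(n)})\leq n$ and $V_{n,r}(m_F)\leq\mathrm{const}\cdot\ep(n)^{1-q_r}$, which makes the subsequence and the ``consecutive $n_\ep$ are comparable'' interpolation unnecessary --- this is precisely how the paper organizes the argument, tying the threshold $\tfrac{L}{n}\rho_N^{-1}$ to $n$ directly and proving $\Card(\Gm_n)\leq n$.
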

\begin{proof}
Write $\gh=\frac{\gk_r}{r+\gk_r}$ and $L=(CK^r)^{\frac{\gk_r}{r+\gk_r}}$. As $0<r<+\infty$ is fixed, the number $\gh$ is fixed. Consider an arbitrary positive integer $n\geq 2$, and write
\begin{align}\label{j def Gm_n}
	\Gm_n=\set{\go \in I_N^* :  (m_N(J_{N,\om}) \|\tilde \vp_\go'\|^r)^\gh \leq \frac{L}{n} (\rho_N)^{-1}
	 \;\te{ and }\; (m_N(J_{N,\go^-})\|\tilde \vp_{\go^{-}}'\|^r)^\gh \geq \frac{L}{n} (\rho_N)^{-1}},
\end{align}
where
\begin{align*}
	\rho_N:=(C^{-3}K^{-r} m_N(J_{N,i})\|\tilde \vp_N'\|^r)^\gh.
\end{align*}
Then, $\Gm_n$ is a finite maximal antichain.
Applying Lemma~\ref{lemma921} for $\go \in I_{N}^\ast$, we have
\begin{align*}
	m_N (J_{N,\om}) &= m_N (\tilde \vp_\go(J_N))=\int \exp(S_{\go}(F_N))\,dm_N\geq C^{-1}\|\exp(S_{\go}(F_N))\|\\
	&\geq C^{-3}  \|\exp(S_{\go^-}(F_N))\|\|\exp(S_{\go_{|\go|}}(F_N))\|.
\end{align*}
Again,
\begin{align}\label{j eq 4}
	m_N (J_{N,\om}) =\int \exp(S_{\go}(F_N))\,dm_N\leq \|\exp(S_{\go}(F_N))\|.
\end{align}
Thus, for any $\go \in I_{N}^\ast$, we have
\begin{align}\label{j eq 6}
	m_N (J_{N,\om})\geq C^{-3}m_N (J_{N,\go^-}) m_N (J_{N,\go_{|\go|}}).
\end{align}
In addition, by the Distortion Property we have
\begin{align}\label{j eq 7}
	\|\tilde \vp_\go'\|\geq K^{-1}\|\tilde \vp_{\go^-}'\|\|\tilde \vp_{\go_{|\go|}}'\|.	
\end{align}
Hence, for any $\go \in \Gm_n$, it follows from \eqref{j def Gm_n}, \eqref{j eq 6}, and \eqref{j eq 7} that
\begin{align}\label{j eq 5}
	(m_N(J_{N,\om}) \|\tilde \vp_\go'\|^r)^\gh\geq ( m_N(J_{N,\go^-}) \|\tilde \vp_{\go^-}'\|^r)^\gh \rho_N\geq \frac Ln.
\end{align}
Collecting together \eqref{eq3451}, \eqref{j eq 4}, and \eqref{j eq 5}, we have
\begin{align}\label{j eq 3}
	L\geq \sum_{\go \in \Gm_n}  ( m_N(J_{N,\om}) \|\tilde \vp_\go'\|^r)^\gh\geq \frac Ln \Card(\Gm_n).
\end{align}
Thus, $\Card(\Gm_n) \leq n$. Let $B$ be a set of cardinality $n$, which has points in each of the sets $\tilde \vp_\omega(X)$ for $\omega \in \Gm_n$; this is possible since, we have seen $\Card(\Gm_n) \le n$. Then, in light of Lemma \ref{lemma5566}, \eqref{j def Gm_n}, and \eqref{j eq 3}, for some constant $A>0$, we have
\begin{align*}
	V_{n, r}(m_F) & \leq \int d(x, B)^r\, dm_F\leq \sum_{\go \in \Gm_n} \int \exp(S_{\go}(F_N)) d(\tilde \vp_{\go} (x), B)^r \,dm_F\\
	&\leq A\tilde K \sum_{\go \in \Gm_n} \|\exp(S_{\go}(F_N))\|\|\tilde \vp_{\go}'\|^r\leq A C \tilde K \sum_{\go \in \Gm_n}m_N(J_{N,\go}) \|\tilde \vp_{\go}'\|^r\\
	& = A C \tilde K \sum_{\go \in \Gm_n} \Big( m_N(J_{N,\go}) \|\tilde \vp_{\go}'\|^r\Big)^{1-\gh}\Big(m_N(J_{N,\go}) \|\tilde \vp_{\go}'\|^r\Big)^\gh\\
	& = A C \tilde K \sum_{\go \in \Gm_n} \Big(\left(m_N(J_{N,\go}) \|\tilde \vp_{\go}'\|^r\right)^\eta\Big)^{\frac{1-\gh}{\eta}}\Big(m_N(J_{N,\go}) \|\tilde \vp_{\go}'\|^r\Big)^\gh\\
	&\leq A C \tilde K\Big(\frac{L}{n} (\rho_N)^{-1}\Big)^{\frac{1-\gh}{\gh}} \sum_{\go \in \Gm_n} \Big(m_N(J_{N,\go}) \|\tilde \vp_{\go}'\|^r\Big)^\gh\\
	&\leq A C \tilde KL\Big(\frac{L}{n} (\rho_N)^{-1}\Big)^{\frac{1-\gh}{\gh}}.
\end{align*}
Noting that
\begin{align*}
	\frac{1-\eta}{\eta}\cdot\frac{\kp_r}{r}=1,
\end{align*}
we have
\[
	n V_{n, r}^{\frac{\gk_r}{r}} (m_F) \leq (A C \tilde KL)^{\frac{\gk_r}{r}}\frac{L}{\rho_N}.
\]
Now, recalling the fact that $N$ depends on $r$, and $r$ is fixed, we have
\[
	\limsup_{n\to\infty} n V_{n, r}^{\frac{\gk_r}{r}} (m_F) \leq  (A C \tilde KL)^{\frac{\gk_r}{r}}\frac{L}{\rho_N}<+\infty,
\]
which finishes the claim.
\end{proof}

We now finish the proof of Theorem~\ref{theorem}.

\noindent Recall that
\begin{align*}
	e_{n,r}(\mu):=V_{n,r}^{1/r}(\mu)
\end{align*}
for any Borel probability measure $\mu$  on $\R^d$.
By Proposition 11.3 of \cite{GL1}, we know:
\begin{enumerate}[(a)]
	\item\label{j item a} If $0\leq t<\ul D_r<s$, then
	\begin{align*}
		\lim_{n\to \infty} ne^t_{n, r}=+\infty \text{ and } \liminf_{n\to \infty} ne^s_{n, r}=0.
	\end{align*}
	\item\label{j item b} If $0\leq t<\ol D_r<s$, then
	\begin{align*}
		\limsup_{n\to \infty} ne^t_{n, r}=+\infty \text{ and } \lim_{n\to \infty} ne^s_{n, r}=0.
	\end{align*}
\end{enumerate}
Claim~\ref{prop32} tells us that
\begin{align*}
	\limsup_{n\to\infty} n V_{n, r}^{\frac{\gk_r}{r}} (m_F)<+\infty,
\end{align*}
which, by \eqref{j item b} above, implies  $\ol D_r(m_F)\leq \gk_r$. By Proposition~\ref{prop33}, we have $\ul D_r(m_F)\geq \gk_r$. Hence,
\begin{align*}
	\gk_r\leq  \ul D_r(m_F)\leq \ol D_r(m_F) \leq \gk_r,
\end{align*}
i.e. the quantization dimension $D_r(m_F)$ of the infinite $F$-conformal measure $m_F$ exists and equals $\gk_r$. Note that if $q_r=\frac{\gk_r}{r+\gk_r}$, then by Lemma~\ref{lemma101}, $\gb(q_r)=rq_r$. Thus, it follows that
\begin{align*}
	D_r(m_F)=\frac{\gb(q_r)}{1-q_r}.	
\end{align*}
Hence, the proof of Theorem \ref{theorem} is complete.
\end{proof}

\begin{remark}
Note that our result shows that the $\gk_r$-dimensional upper quantization coefficient of the infinite $F$-conformal measure $m_F$ is finite, but whether the $\gk_r$-dimensional lower quantization coefficient of $m_F$ is positive still remains open.
\end{remark}

We would like to close this section with a class of examples of
summable H\"older families of functions which fulfill our
assumptions.
\begin{example}
	Assume that an infinite conformal iterated function
	system $\Phi=\{\vp_i\}_{i\in I}$ is co-finitely (or hereditarily)
	regular. Let $g:X\to\D R$ be an arbitrary H\"older continuous
	function. Fix $s>\theta_\Phi$. For every $i\in I$ define
	$$
	f^{(i)}(x)=g(x)+s\log|\vp_i'(x)|.
	$$
	Then, $F:=\{f^{(i)}\}_{i\in I}$ is a summable H\"older family of
	functions for which \eqref{eqmu1} holds, and in consequence,
	Theorem~\ref{theorem} is true.
\end{example}

\end{document}